\documentclass[letter]{amsart}
\usepackage{amsmath,amssymb,amsfonts,amsthm,calc,color,mathrsfs,bbm}

\usepackage[latin1]{inputenc}

\usepackage[dvipsnames]{xcolor}

\usepackage[german, english]{babel}
\usepackage{hyperref}
\usepackage{enumitem}

\selectlanguage{english}

\newcommand{\be}{\begin}
\newcommand{\e}{\end}
\newcommand{\beq}{\begin{equation}}
\newcommand{\eeq}{\end{equation}}

\newtheorem{lemma}{Lemma}[section]

\newcommand{\om}{\omega}
\newcommand{\Om}{\Omega}

\theoremstyle{definition}

\newtheorem{remark}[lemma]{Remark}

\numberwithin{equation}{section}
\newcommand{\comment}[1]{}


\newcommand{\curly}[1]{\mathcal{#1}}
\newcommand{\setof}[2]{\left\{ #1\; : \;#2 \right\}}

\newcommand{\Z}{{\mathbb Z}}
\newcommand{\R}{{\mathbb R}}

\newcommand{\OO}{{\mathcal{O}}}

\newcommand{\La}{{L}}
\newcommand{\q}{{Q}}

\newcommand{\Lp}{{\Delta}}

\newcommand{\supp}{{\mathrm {supp}\,}}

\newcommand{\al}{{\alpha}}
\newcommand{\de}{{\delta}}
\newcommand{\eps}{{\varepsilon}}

\newcommand{\ph}{{\varphi}}
\newcommand{\lam}{{\lambda}}
\newcommand{\gam}{{\gamma}}

\renewcommand{\l}{\left}
\renewcommand{\r}{\right}

\newcommand{\scp}[2]{\langle#1,#2\rangle}

\newcommand{\qmexp}[1]{\left\langle #1\right\rangle}

\newcommand{\Hm}[1]{\leavevmode{\marginpar{\tiny%
$\hbox to 0mm{\hspace*{-0.5mm}$\leftarrow$\hss}%
\vcenter{\vrule depth 0.1mm height 0.1mm width \the\marginparwidth}%
\hbox to 0mm{\hss$\rightarrow$\hspace*{-0.5mm}}$\\\relax\raggedright
#1}}}


\begin{document}

\title[Optimal Hardy weights on the Euclidean lattice]{Optimal Hardy weights on the\\ Euclidean lattice}

\begin{abstract} We investigate the large-distance asymptotics of optimal Hardy weights on $\mathbb Z^d$, $d\geq 3$,  via the super solution construction. For the free discrete Laplacian, the Hardy weight asymptotic is the familiar $\frac{(d-2)^2}{4}|x|^{-2}$ as $|x|\to\infty$. We prove that the inverse-square behavior of the optimal Hardy weight is robust for general elliptic coefficients on $\mathbb Z^d$: (1) averages over large sectors have inverse-square scaling, (2), for ergodic coefficients, there is a pointwise inverse-square upper bound on moments, and (3), for i.i.d.\ coefficients, there is a matching inverse-square lower bound on moments. The results imply $|x|^{-4}$-scaling for Rellich weights on $\mathbb Z^d$. Analogous results are also new in the continuum setting. The proofs leverage Green's function estimates rooted in homogenization theory.  
\end{abstract}

\author[M. Keller]{Matthias Keller}
\address{Matthias Keller, Universit\"at Potsdam, Institut f\"ur Mathematik, 14476  Potsdam, Germany}
\email{matthias.keller@uni-potsdam.de}
\author[M. Lemm]{Marius Lemm}
\address{Marius Lemm, Institute of Mathematics, EPFL, 1015 Lausanne, Switzerland}
\email{marius.lemm@epfl.ch}
\date{August 23, 2021}
\maketitle

\section{Introduction}
Since their origin in the early $20$th century, the study of Hardy inequalities has been a vigorous field of research.  
On the one hand, this stems from the great variety of applications Hardy inequalities have in analysis and mathematical physics, for example as uncertainty principles in quantum mechanics, \cite{Fr11}, for the solvability and growth control of differential equations \cite{AGG06} and in spectral graph theory \cite{Nag04}. On the other hand, they provide intriguing examples of functional inequalities where explicit sharp constants and asymptotics of minimizers or ground states can be studied. While most of the literature focuses on  the continuum setting of differential operators, the inequality was originally phrased and proven by Hardy in the discrete setting; see \cite{KMP06} for a review of the history. 

Classically, the Hardy weight $ w $ is expressed as the  inverse of the distance function to some power which is for example given by $ w(x)=\frac{(d-2)^2}{4}|x|^{-2} $ for the most familiar setting in $ \R^{d} $, $ d\ge 3 $. This weight is optimal in a precise sense and in particular the constant $ \tfrac{(d-2)^2 }{4}$ is sharp. In 2014, Devyver, Fraas, and Pinchover \cite{DFP} (see also \cite{DP}) presented a method---the so-called supersolution construction---to construct \emph{optimal} Hardy weights for general (not necessarily symmetric) positive second-order elliptic operators   on non-compact Riemannian manifolds. This method was later extended in \cite{KePiPo2} to weighted graphs where it was also shown that for the standard Laplacian on $ \Z^{d} $, $ d\ge 3 $, there is an optimal Hardy weight $ w $ which asymptotically satisfies the continuum asymptotics $ w(x)\sim \frac{(d-2)^2}{4}|x|^{-2}  $, improving the constant previously obtained by Kapitanski and Laptev \cite{KaL}. While the method using super solutions such as the Green's function to obtain Hardy inequalities and the ground state transform  seem  to be folklore, see e.g. \cite{Fi00,FS08,FSW08,Gol14}, the novelity of the approach lies in the use of solutions with specific properties and the proof of optimality. For further recent work on optimal decay of Hardy weights in the continuum, see \cite{BGGP,DPP,KP,PV}.

\subsection{Summary of main results about optimal Hardy weights}
In this paper we establish inverse-square behavior at large distances of optimal Hardy weights for elliptic operators on $ \Z^{d} $ with $d\geq 3$. Specifically, we prove \textit{inverse-square bounds on optimal Hardy weights} in the following settings.
\begin{itemize}
	\item For general coefficients, upper and lower bounds on sectorial averages (Theorem~\ref{thm:mainspatial}). 
	\item For ergodic coefficients satisfying a logarithmic Sobolev inequality (for example independent and identically distributed random variables), pointwise upper bounds on moments (Theorem~\ref{thm:mainrandom}).
	\item For independent and identically distributed random coefficients, pointwise lower bounds on moments (Theorem \ref{thm:iid}).
\end{itemize}

An upper bound on an optimal Hardy weight informs on what is the best possible expected Hardy inequality \eqref{eq:hardy}, while a lower bound corresponds to a concrete Hardy inequality that is useful in applications.

We comment on the fact that all of the bounds we prove here involve some kind of averaging or at least exclude a probability-zero set. One may naively hope that, for fixed ellipticity ratio, one can prove inverse-square bounds on optimal Hardy weights that hold (a) pointwise and (b) deterministically (i.e., uniformly in the coefficients). However, results from classical elliptic regularity theory \cite{A1,A2,deGiorgi,LSW63,Moser,Nash} show this is impossible and so one needs to compromise on either (a) or (b). More precisely, optimal Hardy weights can be expressed through Green's function derivatives.  For the latter, in the context of elliptic regularity it is well-known that pointwise, deterministic bounds of the required scaling do not hold; see Section \ref{sect:ERT} for more on this.

Here, we address this issue by proving suitably averaged versions of the widely expected inverse-square behavior of optimal Hardy weights on $\Z^d$.

On the one hand, one can compromise on the pointwise nature of the bound by spatial averaging; this leads us to Theorem~\ref{thm:mainspatial}. On the other hand, one expects that the counterexamples developed in elliptic regularity are non-generic in an appropriate sense. This motivates the study of  random coefficients, which physically model materials with disordered microstructure. The associated regularity theory belongs to the extremely active area of stochastic homogenization theory \cite{ABM,AKMpaper,AKM,AS,BG,BGO1,BGO2,B,CKS,CN,DG,DD,DGL,DGO,GNO,GO,GM,KL,MN,MO1,MO2,MO3} and we draw heavily on some relatively recent precise descriptions of Green's function derivatives, specifically \cite[Theorem 8.20 \& Section 9.2]{AKM}, \cite[Corollary 3]{BGO1}, or \cite[Proposition 4.2]{GM}. Along the way, we find that the leading-order asymptotic behavior of the optimal Hardy weight depends on the usual ``correctors'' and will thus remain genuinely random, cf.\ \cite{AKM,MN,MO3}.



Our main focus lies on the treatment of the discrete setting since it is more interesting to us from a technical point of view. Nonetheless, we mention that analogous results in the continuum case do not seem to have appeared previously but can be derived in a parallel manner. We include an example in Section \ref{sect:continuum}.

%
%

 \pagebreak

\subsection{Organization of the paper}
The paper is organized as follows.
\be{itemize}

\item In \textbf{Section \ref{sect:prelim}}, we recall the elliptic graph setting, review the definition of optimal Hardy weights and their graph-intrinsic supersolution construction from \cite{KePiPo2}. 

\item In \textbf{Sections \ref{sect:main1}} we state our main results on optimal Hardy weights with spatial averaging.

\item In \textbf{Sections \ref{sect:main2}} we state our main results on optimal Hardy weights with probabilistic averaging. 

\item The brief \textbf{Section \ref{sect:general}} contains rather general upper and lower bounds relating optimal Hardy weights and Green's functions on any elliptic graph.
 
\item In \textbf{Section \ref{sect:mainspatialpf}},  we prove the results of \textbf{Section \ref{sect:main1}} via spatial averaging.
\item In \textbf{Sections \ref{sect:mainrandompf} and \ref{sect:iidpf}}, we prove the main results of \textbf{Section \ref{sect:main2}} via probabilistic averaging.
\item In the short \textbf{Section \ref{sect:rellich}}, we apply the results to Rellich inequalities on elliptic graphs. Specifically, we use the techniques of \cite{KePiPo4} to derive the expected $|x|^{-4}$ scaling in a probabilistic sense.

\item The \textbf{Appendix} contains the proof of a technical Lemma \ref{sect:onedirectionpf} and the proof of an elementary fact about the free Green's function that does not seem to appear in the standard literature --- it is nowhere locally constant.
\e{itemize}

An open problem is described in Remark \ref{rmk:iid} (iii).\\

We note that an earlier preprint version of this manuscript also contained results on the asymptotic expansion of the averaged Green's function and its derivatives. These results have been moved into the separate paper \cite{KLother}.


\section{Preliminaries}\label{sect:prelim}
\subsection{The elliptic graph setting}
Let $ X $ be a discrete countable set and let $b $ be a  \emph{graph} over $ X $. That is $ b:X\times X\to[0,\infty) $ is a symmetric map with zero diagonal. We denote $ x\sim y $
whenever $ b(x,y)>0 $. The graph  $ b $ is assumed  to be \emph{connected} that is if for any to vertices $ x,y\in X $ there is a path $ x=x_{0}\sim \ldots\sim x_{n}=y $.
We denote
by $ \deg $ the \emph{weighted vertex degree}
\begin{align*}
\deg(x)=\sum_{y\in X}b(x,y).
\end{align*}

We assume that the graph satisfies the following ellipticity condition for some $ E>0 $
\begin{align}\label{E} 
 b(x,y)\ge E\sum_{z\in X}b(x,z)
\end{align}
for all $ x\sim y $.
It is not hard to see that \eqref{E} implies that the graph has bounded combinatorial degree, i.e., 
\begin{align*}
	 \sup_{x\in X}\#\{y\in X\mid b(x,y)>0\} <\infty.
\end{align*}
 Hence, $ \deg $ takes  finite values and the graph is locally finite.

Let $ \La $ be the operator acting on $ C(X) $, the real valued functions on $ X $, via
\beq\label{eq:Ldefn}
\La f(x)=\sum_{\substack{y\in X}}b(x,y)(f(x)-f(y)).
\eeq
Restricting $ \La $ to the compactly supported functions $ C_{c}(X) $, we obtain a symmetric operator on $ \ell^{2}(X) $ whose Friedrich extension  is with slight abuse of notation also  denoted by $ L $ with domain $ D(L) $.
Note that $ L $ is bounded if and only if $ \deg $ is bounded and in this case $ D(L)=\ell^{2}(X) $, see e.g. \cite[Theorem~9.3]{HKLW}. The associated quadratic form 
$$
\q(f)=  \sum_{x, y\in X}b(x,y)(f(x)-f(y))^{2}
$$
is the Dirichlet energy of $f\in C(X)$ and for $ f\in D(L) $ we have
\begin{align*}
\q(f)=2\langle f,Lf\rangle.
\end{align*}
Denote by $ G: X\times X\to[0, \infty] $ the Green function of $L$ which is given by
\begin{align*}
G(x,y)=\lim_{\al\searrow 0}(L+\al)^{-1}1_{x}(y),
\end{align*}
where $ 1_{x} $ is the characteristic function of the vertex $ x $.
In case $ G(x,y) $ is finite for some $ x,y\in X $ it is finite for all $ x,y \in X $ due to connectedness of the graph. In this case the graph is called \emph{transient}. We say the Green's function is \emph{proper} if $ G(o,\cdot) $ is a proper function for some vertex $ o\in X $. (We recall that a function is proper if the preimages of compact sets are compact.) 

In the following, whenever we consider elliptic operators on $ \Z^{d} $, we will always have that the Green function $ G(o,\cdot) $ is proper. Also, we will often denote $$ G(x)=G(o,x) $$
and choose $ o =0$ whenever $ X=\Z^{d} $.
\subsection{Optimal Hardy weights}
In this paper, we are interested in the large-distance behavior of optimal Hardy weights. 

We say $ w:X\to [0,\infty) $ is a \emph{Hardy weight} for the graph $ b $ if all $ f\in C_{c}(X)$ satisfy the \emph{Hardy inequality} with weight $w$,
\beq\label{eq:hardy} 
\begin{aligned}
\q(f)=\sum_{x, y\in X}b(x,y)(f(x)-f(y))^{2}\ge \sum_{x\in X}w(x)f(x)^{2}.
\end{aligned}
\eeq
This inequality can be extended to the extended Dirichlet space, i.e., the closure with respect to $ \q^{1/2} $ which becomes a norm as we assumed that the graph is transient, see e.g. \cite{FOT} for general Dirichlet forms or \cite[Theorem~B.2]{KLSW} in the graph case. In particular, the constant function $ 1 $ is not in the extended Dirichlet space of transient Dirichlet forms. If $ \deg $ is bounded, then $ L $ is bounded and $ \ell^{2}(X) $ is included in the extended Dirichlet space, see \cite{HKLW}.

A \emph{ground state} for an operator $ H $ is a positive non-trivial solution $ u\in C(X) $, $ u\ge 0 $ to $ Hu=0 $ such that for any other super-solution $ v \ge 0$ to $ Hv\ge0 $ outside of a compact set with $ u\ge v\ge 0 $  there exists $ C\ge0 $ such that $ v\ge Cu $. In our situation of locally finite graphs a positive solution to $ Hu=0 $ always exist by the virtue of the Allegretto-Piepenbrink theorem, \cite[Theorem~3.1]{HK} and $ u $ is a ground state in the critical case (for the definition of criticality see below).

The following definition of optimal Hardy weights was first made by Devyver, Fraas, and Pinchover \cite{DFP}, see also \cite{DP}, for elliptic operators in the continuum and it was later investigated in \cite{KePiPo2} for graphs.
\pagebreak
\be{definition}\label{defn:optimal}
We say that $w$ is an \textit{optimal Hardy weight} if it satisfies the following three conditions.
\be{itemize}
	\item[(a)]	\textit{Criticality}: For every $\tilde w\geq w$ but $\tilde w\neq w$, the Hardy inequality fails.
	\item[(b)] \textit{Null-criticality:} The formal Schr{\"o}dinger operator $\La-w$ does not have a ground state in $\ell^2(X)$. 
	\item[(c)] \textit{Optimality near infinity:} For any $\lambda>0$ and any compact set $K$, the function $(1+\lambda)w$ fails to be a Hardy weight for compactly supported functions in $X\setminus K$.
	 \e{itemize}
\e{definition}

We note that (a) and (b) imply (c) as can be seen from the proof in \cite{KePiPo2} and it was shown in the continuum in \cite{KP}. Another answer to how large  Hardy weight can be is given in \cite{KP} in the continuum case in terms of integrability in at infinity.

\subsection{Connection to the Green's function}
The results of \cite{KePiPo2} (building on \cite{DFP,DP}) derived optimal Hardy weights in an intrinsic way via superharmonic functions. More precisely, by \cite[Theorem~0.1]{KePiPo2}, the function $ w:X\to[0,\infty) $ defined by
	\begin{align*}
	w(x)=2\frac{\La u^{\frac{1}{2}}(x)}{u^{\frac{1}{2}}(x)}
	\end{align*}
	is an optimal Hardy weight whenever $ u $ is a proper positive superharmonic function which is harmonic outside of a finite set and satisfies 	
	the growth condition $ \sup_{x\sim y} u(x)/u(y)<\infty$. Now, $ G(o,\cdot)=G(\cdot) $ is a  positive superharmonic function which  is  harmonic outside of $ \{o\} $. Then, \cite[Theorem~0.2]{KePiPo2} implies that
	\begin{align}\label{eq:wG}
	w_G(x)=2\frac{\La G(x)^{\frac{1}{2}}}{G(x)^{\frac{1}{2}}}=1_{o}(x)+\frac{1}{G(x)}
	\sum_{\substack{y\in X: \\y\sim x}}b(x,y)(G(x)^{\frac{1}{2}}-G(y)^{\frac{1}{2}})^{2}
	\end{align}	
	is an optimal Hardy weight. The condition  $ \sup_{x\sim y} G(x)/G(y)<\infty$ is satisfied by  \eqref{E}, see Lemma~\ref{lm:basic} below, and, therefore,  $ w_{G} $ is optimal in the sense of Definition~\ref{defn:optimal} whenever $ G $ is proper.

We will make use of the facts discussed in this section as follows:

\be{enumerate}
	\item We can use bounds on the Green's function and its derivatives to obtain information on $w_G$ via \eqref{eq:wG}.
	\item We can use the optimality of the Hardy weight $w_G$ to conclude asymptotic bounds on all possible Hardy weights $w$.
\e{enumerate}

In previous works \cite{KePiPo2,KePiPo3}, this has been implemented for special operators like the free Laplacian on $\mathbb Z^d$ where the Green's function is, at least asymptotically, exactly computable through the Fourier transform. Here we show for the first time that the Hardy weight scaling of $|x|^{-2}$ is robust in various senses for a wide class of elliptic divergence-form operators on $\Z^d$. For us, the Fourier transform is no longer available and other methods have to be used.

\subsection{From upper bounds on $w_G$ to asymptotic bounds on all Hardy weights}
In this short section, we make precise Step (2) mentioned above, i.e., how to go from upper bounds on $w_G$ to asymptotic bounds on all Hardy weights. Note that the Hardy inequality \eqref{eq:hardy} is stronger if the weight $w$ is large, so we are interested in limiting how large a general Hardy weight can be, given a bound on the special optimal weight $w_G$.

Let $ d $ be some metric on $ X $ and $ \|x\|=d(x,o) $ for $ x\in X $ and $ o\in X $ which was used in the definition of $ G(x)=G(o,x) $. Below when we consider $ X=\Z^{d} $ we will use the Euclidean norm $ |\cdot| $ for $ \|\cdot\| $
and $ o=0 $.

\be{proposition}\label{prop:wGw} Assume that $b$ is a connected transient graph over $X$ which satisfies \eqref{E} and $ G $ is proper. 
Assume that there exists a constant $C>0$ and a power $p>0$ such that 
\beq\label{eq:wGrate}
w_G(x)\leq \frac{C}{1+\|x\|^{p}},\qquad  x\in X.
\eeq
Suppose a function $\tilde w:X\to (0,\infty)$ satisfies
$$
\tilde w(x)\geq (1+\lam)\frac{C}{1+\|x\|^p},\qquad  x\in X\setminus K,
$$
for some $\lam>0$ and some compact set $K\subseteq X$. Then $\tilde w$ is not a Hardy weight.
\e{proposition}

In particular, Proposition \ref{prop:wGw} shows that the asymptotic decay rate of $w_G$ controls the asymptotics of all Hardy weights: Assuming we know that \eqref{eq:wGrate} holds, there cannot exist a Hardy weight $\tilde w$ satisfying $\tilde w(x)\gtrsim \|x\|^{\eps-p}$ asymptotically (meaning outside of compacts), for any $\eps>0$. Here, $\|x\|^\eps$ can be replaced by any function that goes to infinity as $\|x\|\to\infty$, say $\log\log\|x\|$.

\be{proof}[Proof of Proposition \ref{prop:wGw}]
By optimality at infinity of $w_G$, we know that or every $\lam>0$ and every compact set $K$, there exists a function $f$ supported in $X\setminus K$ such that the Hardy inequality with weight $(1+\lam)w_G$ fails. By the assumed bound on $w_G$, this implies that the Hardy inequality also fails for any $\tilde w$ such that
$$
\tilde w(x)\geq \frac{C(1+\lam)}{\|x\|^{p}}\geq (1+\lam)w_G(x),\qquad \textnormal{ on } X\setminus K.
$$
This proves Proposition \ref{prop:wGw}.
\e{proof}

We remark that Proposition \ref{prop:wGw} only uses optimality near infinity of $w_G$, i.e., property (c) of Definition~\ref{defn:optimal}. Properties (a) and (b) also transfer information from optimal Hardy weights to general Hardy weights, but we focus on property (c) because it concerns the asymptotic properties of Hardy weights and is therefore most useful to us.

\section{Bounds on optimal Hardy weights with spatial averaging}\label{sect:main1}
In this section, we discuss our first class of main results, namely upper and lower bounds on the optimal Hardy weight $w_G$ for models on $\Z^d$, $d\geq 3$. These bounds translate to information on the best possible large-distance decay of any Hardy weight via Proposition~\ref{prop:wGw}.

\subsection{The benchmark: the free Laplacian on $\Z^d$}
We are interested in the long-distance behavior of the optimal Hardy weight $w_G$ from \eqref{eq:wG}. To understand this, we require asymptotic control on $G$ and $\nabla G$ (for a definition of $ \nabla G $ see the next subsection) which is made explicit Proposition \ref{prop:wGbounds} below. 

The benchmark is the case of the free Laplacian $ \Delta $ on $\Z^d$, $d\geq 3$, 
which is given via the standard edge weights $ b(x,y)=1 $ if $ |x-y|=1 $ and zero otherwise. The Green's function $ G_{0} $ of $ \Delta $ is asymptotically explicitly computable and satisfies  $G_0(x)\sim |x|^{2-d}$ and $|\nabla G_0(x)|\sim |x|^{1-d}$; see e.g.\ \cite{Spitzer}, where $$  
|x|=\sqrt{|x_1|^{2}+\ldots+|x_d|^{2}}
 $$ denotes the Euclidian norm. Using asymptotic formulas, the optimal Hardy weight, including the sharp constant, is computed as
\begin{align}\label{eq:wG0}
w_{G_0}(x)={2}\frac{\Lp[G_0^{1/2}(x)]}{G_0^{1/2}(x)}=\frac{\big( d-2 \big)^2}{4} \,\frac{1}{|x|^2} + \OO\Bigg( \frac{1}{|x|^3} \Bigg), \quad\,\, x\in \Z^d,
\end{align}
in \cite[Theorem~8.1]{KePiPo2}.
See \cite{KaL} for a similar estimate with a different constant.

Our goal in this paper is to show that the inverse square decay is robust, in various senses, within the class of elliptic graphs on $\Z^d$, $d\geq 3$,whose Green's functions are no longer known to be explicitly computable. 

\subsection{Elliptic operators on $\Z^d$}
Let us introduce the setting of elliptic operators on $\Z^d$,  $d\geq 3$.  Let $\mathbb E^d$ denote the set of undirected edges $[x,y]$ with nearest neighbors $x,y\in \Z^d$, i.e. $ |x-y|=1 $. For  $\lam\in (0,1]$, we define the class of uniformly elliptic coefficient fields
\begin{align*}
\mathcal{A}_{\lam}:=\setof{a:\mathbb E^d\to \R}{\lam \leq a\leq 1}.	
\end{align*}
We will study the following class of examples. For $a\in \mathcal{A}_{\lam}$, we set
\beq\label{eq:setting2}
X=\Z^d,
\qquad b(x,x+e_j)=b(x+e_j,x)=a([x,x+e_j])
\eeq
for all $ x\in\Z^{d} $ and the standard basis $ e_{j} $, $1\leq j\leq d$. One can easily verify that the graph defined by \eqref{eq:setting2} satisfies the ellipticity condition \eqref{E} for some number $E=E(d,\lam)>0$. 
We define the associated operator $\La$ via \eqref{eq:Ldefn} and also denote by $ L $ corresponding self-adjoint operator on $ \ell^{2}(\Z^{d}) $. Equivalently, we may express this as $$ \La=\nabla^* a \nabla $$ for appropriately defined $\nabla,\nabla^*$,  cf. \cite{MO1},
 i.e.,
\begin{align*}
	\nabla :C(\Z^{d})\to C(\Z^{d})^{d}, \quad 
	\nabla f(x )=
	\left(\begin{matrix}
		f(x+e_{1})-f(x)\\
		\vdots\\
		f(x+e_{d})-f(x)
	\end{matrix}\right),
\end{align*}
$ a $ is identified with the operator induced by the  $ d\times d $ diagonal matrix
\begin{align*}
	\left(\begin{matrix}
		a([\cdot,\cdot+e_{1}])&&0\\
	&	\ddots&\\
0	&&a([\cdot,\cdot +e_{d}])
	\end{matrix}\right)
\end{align*}
and $ \nabla ^{*} $ is the formal adjoint of $\nabla  $ which acts as
\begin{align*}
	\nabla^{*} :C(\Z^{d})^{d}\to C(\Z^{d}), \quad \nabla ^{*}
\left(\begin{matrix}
	f_1(x)\\
	\vdots \\
	f_d(x)
\end{matrix}\right)= \sum_{j=1}^{d}(f_j(x-e_{j} )-f_j(x)).
\end{align*}
With slight abuse of notation we also write for $ f\in C(\Z^{d}) $ an edge $ e=[x,x+e_{j}] $
\begin{align*}
	\nabla f(e)=f(x+e_{j})-f(x).
\end{align*}

\subsection{Background: Elliptic regularity theory and the Green's function}\label{sect:ERT}
In order to correctly estimate the optimal Hardy weights of these graphs defined by \eqref{eq:wG}, the ideal situation would be to  have upper and lower bounds on their Green's functions which agree with the behavior $G_0(x)\sim |x|^{2-d}$ and $|\nabla G_0(x)|\sim |x|^{1-d}$ of the free Green's function $G_0$ that we reviewed above. Deriving such bounds on $\nabla G$ and its continuum analog are old problems in potential theory and probability theory with close connections to the elliptic regularity theory developed by de Giorgi, Nash, and Moser in the 1950s \cite{deGiorgi,Moser,Nash}.

On the one hand, the situation for $G$ itself is relatively pleasant. Based on Nash's parabolic comparison estimates to the free Laplacian, Littman, Stampacchia, and Weinberger \cite{LSW63} and Aronson \cite{A1,A2} proved that the $|x|^{2-d}$-decay is preserved uniformly in the class $\curly{A}_{\lam}$, i.e., there exists a constant $C_{d,\lam}>1$ such that
\beq\label{eq:aronson}
C^{-1}_{d,\lam} |x-y|^{2-d}\leq G(x,y)\leq C_{d,\lam} |x-y|^{2-d},\qquad  x, y\in \R^d, x\neq y.
\eeq
(This is commonly known as the Nash-Aronson bound.)
We emphasize that $C_{d,\lam}$ depends on the coefficient field $a$ only through the ellipticity parameter $\lam$. Note that \eqref{eq:aronson} concerns the continuum setting. For the extension to the discrete case, see \cite{DD,SC} and references therein. In the discrete case, we may use the fact that the Green's function is bounded on the diagonal to write
\beq\label{eq:aronsondiscrete}
C^{-1}_{d,\lam} (1+|x-y|)^{2-d}\leq G(x,y)\leq C_{d,\lam} (1+|x-y|)^{2-d},\qquad  x,y\in\Z^d.
\eeq
We remark that the bounds in \eqref{eq:aronson} and \eqref{eq:aronsondiscrete} establish that the Green's function is proper. We capture this fact because of its relevance in the following lemma.

\begin{lemma}Whenever, $ a\in \mathcal{A}_{\lambda} $ for $ \lambda \in(0,1]$, the  Green's function $G(y,\cdot)$ is proper for any fixed $y\in \Z^d$.
\end{lemma}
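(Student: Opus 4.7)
The plan is to reduce properness to the Aronson-type upper bound \eqref{eq:aronsondiscrete}, which holds in the class $\mathcal{A}_\lam$. Since $d \geq 3$, the exponent $2 - d$ is strictly negative, so the bound
$$
G(y, x) \leq C_{d,\lam}(1+|x-y|)^{2-d}
$$
immediately gives $G(y, x) \to 0$ as $|x| \to \infty$. This is the essential analytic input; everything else is bookkeeping.

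To spell out properness in the sense of Section~\ref{sect:prelim}, I would fix an arbitrary $c > 0$ and show that the superlevel set $\{x \in \Z^d : G(y, x) \geq c\}$ is finite. Any such $x$ must satisfy $C_{d,\lam}(1+|x-y|)^{2-d} \geq c$, which rearranges to
$$
|x - y| \leq (C_{d,\lam}/c)^{1/(d-2)} - 1.
$$
Only finitely many lattice points lie in this ball, so the superlevel set is finite, hence compact in the discrete topology on $\Z^d$. Taking preimages of a general compact subset $K$ of the positive target reduces to this case by setting $c := \inf K > 0$; by the lower bound in \eqref{eq:aronsondiscrete} the Green's function is everywhere positive and finite, so the target of $G(y,\cdot)$ is contained in $(0,\infty)$ and compact subsets of this target are bounded away from zero.

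There is no real obstacle: the lemma is a short corollary packaging the Aronson upper bound with the transience assumption $d\geq 3$. The only subtlety worth flagging is the convention for ``proper'' in the graph setting; since the paper only invokes properness of $G$ through the supersolution construction, which needs exactly the finiteness of superlevel sets $\{G \geq c\}$, the level-set argument above is what is actually required.
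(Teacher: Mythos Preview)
Your proposal is correct and matches the paper's approach: the paper does not give a separate proof but simply remarks, just before stating the lemma, that the bounds \eqref{eq:aronson} and \eqref{eq:aronsondiscrete} establish properness. Your argument spells out exactly this implication---the upper Aronson bound forces $G(y,x)\to 0$ as $|x|\to\infty$, hence superlevel sets are finite---and your care about the codomain $(0,\infty)$ correctly identifies why preimages of compact sets are finite.
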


It turns out that bounds on $\nabla G$  are a more subtle and delicate matter for general elliptic equations (discrete or continuous). Indeed, the standard elliptic regularity theory \cite{deGiorgi,Nash,Moser} only yields
$
|\nabla G(x)|\leq C(d,\lam) |x|^{2-d-\al(d,\lam)}
$
for some $\al(d,\lam)>0$; see also \cite{MO1,MO2}. Moreover, within the general elliptic framework as defined above, there are counterexamples in the continuum that show one cannot expect a pointwise upper bound $|\nabla G(x)|\leq C|x|^{1-d}$ to hold with an $a$-uniform constant $C$. Indeed, explicit counterexamples can be constructed using the theory of quasi-conformal mappings; see p.\ 795 in \cite{GO} and p.\ 299 in \cite{GT}.

What does this mean for the optimal Hardy weight $w_G$? \textit{In summary, we cannot hope to prove $a$-uniform bounds on optimal Hardy weights that simultaneously hold (a) pointwise and (b) deterministically.} In the following sections, we show that after loosening either requirement (a) or (b), inverse-square bounds do hold.

\subsection{Inverse-square bounds on spatial averages}
We follow the order laid out in the introduction and begin with the most general results which hold for all coefficient fields $a\in \curly{A}_{\lam}$. As we reviewed just above, this requires compromising on the pointwise nature of the bound and so we consider spatial averages.

We introduce the cone $\curly C_{j,\al}\subseteq \R^d$ in direction $j\in \{1,\ldots,d\}$ with opening angle given in terms of $\al\in (0,1)$, i.e.,
$$
\curly C_{j, \al}=\setof{x\in \R^d}{\scp{x}{e_j}>(1-\al)|x|}.
$$
We then define the associated discrete sector of radial size $\ell>0$ on scale $R>0$ by
$$
\curly S^{j,\al}_{R,\ell}=\setof{x\in\Z^d\cap \curly C_{j,\al}}{R\leq |x|\leq \ell R}.
$$

\be{theorem}[Inverse-square  bounds on sectorial averages]\label{thm:mainspatial}
Let $d\geq 3$ and $\lam\in (0,1]$. There exist constants $c_{d,\lambda}>1$  and  $\ell=\ell_d>1$ such that for every $a\in\curly{A}_{\lam}$ and every choice of $R\geq 1$, $j\in \{1,\ldots,d\}$ and $\al\in (0,1)$, 
\beq\label{eq:mainspatial}
  {c_{d,\lam}}^{-1} R^{-2}
 \leq
  R^{-d}\sum_{x\in \curly S^{j,\al}_{R,\ell_d}} w_G(x)\leq c_{d,\lam} R^{-2}.
\eeq
\e{theorem}

Note that the cardinality of $\curly S^{j,\al}_{R,\ell_d}$ is of order $R^d$, so up to a change in the constant $c_{d,\lam}$, \eqref{eq:mainspatial} indeed gives bounds on \textit{averages over large sectors}. The sectors can be replaced by other sufficiently large sets, e.g., annuli.

Theorem \ref{thm:mainspatial} is proved in Section~\ref{sect:mainspatialpf}.

\subsection{Optimal Hardy weights in the continuum}\label{sect:continuum}
While we focus on the discrete case, in this section we also discuss the continuum. Optimal Hardy weights were originally considered in the continuum in work of Devyver, Fraas, and Pinchover, \cite{DFP,DP}. The notion of optimality is analogous to Definition~\ref{defn:optimal}; see   \cite[Definition~2.1]{DFP}. 

We consider an elliptic, divergence form operator
$$
P=-\mathrm{div}(A\nabla)
$$
on $\R^d$ with a scalar function $A$ belonging to
$$
\curly{A}^{cont}_\lam=\setof{\tilde A:\R^d\to\R}{ \textnormal{ H\"older continuous, }\lam\leq \tilde A\leq 1 \textnormal{ a.e.}}
$$
 for some $\lam>0$. Moreover, \cite{DFP} mention that the assumption of H\"older continuity of $\tilde A$ can be relaxed to any assumption guaranteeing standard elliptic regularity theory, which in addition to $\lam\leq \tilde A\leq 1$ means just measurability \cite{deGiorgi,Moser,Nash}.
  A Hardy weight is defined as a non-zero function $W:\R^d\setminus\{0\}\to[0,\infty)$ verifying 
$$
\scp{\varphi}{P\varphi}\geq \int_{\R^d\setminus\{0\}} W(x) |\varphi(x)|^2 \mathrm{d} x,\qquad  \varphi \in C_0^\infty(\R^d\setminus\{0\}).
$$
According to Theorem 2.2 in \cite{DFP}, an optimal Hardy weight can be defined in terms of the minimal positive Green's function with a pole at the origin, $G$, in the following way
\beq\label{eq:WG}
W_G(x)=\l|\nabla\log \sqrt{G(x)}\r|_A^2=\frac{|\nabla G(x)|_A^2}{4G(x)^2},\qquad \text{where } |\xi|_A^2=\xi\cdot A\xi.
\eeq

We have the following continuum analog of Theorem \ref{thm:mainspatial}. For  $j\in \{1,\ldots,d\}$, $\al\in (0,1)$, and $R,\ell>0$, we define the sector
$$
S^{j,\al}_{R,\ell}=\setof{x\in\R^d\cap \curly C_{j,\al}}{R\leq |x|\leq \ell R}.
$$

\be{proposition}[Inverse square behavior of annular averages --- continuum version]\label{prop:continuum}
Let $d\geq 3$ and $\lam\in(0,1]$. There exists constants $c_{d,\lam},\ell_d>1$ so that for every $A\in \curly{A}^{cont}_\lam$ and every choice of $R\geq 1$, $j\in \{1,\ldots,d\}$ and $\al\in (0,1)$, 
\begin{align*}
{c_{d,\lam}}^{-1} R^{-2}
 \leq
  R^{-d}\int\limits_{S^{j,\al}_{R,\ell_d}} W_G(x) \mathrm{d} x \leq c_{d,\lam} R^{-2}.	
\end{align*} 
\e{proposition}

The proof of Proposition \ref{prop:continuum} is a continuum analog of the proof of Theorem \ref{thm:mainspatial}; see Section \ref{sect:continuumpf} for a sketch.

For readers interested in the continuum situation, we mention that also the probabilistic results discussed in the next section have continuum analogs that appear to be new. Since we focus on the discrete setting here, we leave the extensions to the interested reader.

\section{Bounds on optimal Hardy weights for random coefficients}\label{sect:main2}
\subsection{Upper bound for ergodic coefficients}
We can hope to derive pointwise bounds if we compromise on the deterministic nature of the bound, essentially hoping that the counterexample from the elliptic regularity theory reviewed in Section \ref{sect:ERT} are appreciably non-generic and thus do not affect the generic behavior of the optimal Hardy weight.

To this end, we now introduce elliptic operator with ergodic coefficients. Physically, these model materials with disordered microstructure.
We now describe the mathematical setup. Let $\mathbb P$ be a probability measure on the measure space $\curly{A}_{\lam}$ endowed with the product topology. We write $\qmexp{\cdot}$ for the associated expectation value.

\be{assumption}\label{ass:P}
We make the following two assumptions on $\mathbb P$.
\be{enumerate}
\item $\mathbb P$ is stationary, i.e., for any $z\in\Z^d$, the translated coefficient field $a(\cdot+z)$ is also distributed according to $\mathbb P$.
\item $\mathbb P$ satisfies a logarithmic Sobolev inequality with constant $\rho>0$, i.e., for any random variable $\zeta:\curly{A}_{\lam}\to \R$, it holds that
\beq\label{eq:LSI}
\qmexp{\zeta^2 \log\frac{\zeta^2}{\qmexp{\zeta^2}}}\leq \frac{1}{2\rho}\qmexp{\sum_{e\in \mathbb E^d} \l(\mathrm{osc}_{e}\zeta\r)^2},
\eeq
where the oscillation of $\zeta$ is a new random variable defined by
$$
\l(\mathrm{osc}_{e}\zeta\r)(a)=\sup_{\substack{\tilde a\in \curly{A}_{\lam}:\\ \tilde a=a \mbox{\scriptsize { on }}\mathbb{E}^{d}\setminus\{e\}}} \zeta(\tilde a)-\inf_{\substack{\tilde a\in \curly{A}_{\lam}:\\  \tilde a=a \mbox{\scriptsize { on }}\mathbb{E}^{d}\setminus\{e\}}} \zeta(\tilde a).
$$
\e{enumerate}
\e{assumption}

In a nutshell, Assumption \ref{ass:P} specifies that the probability measure $\mathbb P$ is stationary and ``sufficiently'' ergodic. Indeed, the logarithmic Sobolev inequality implies a spectral gap and thus mixing time estimates for the associated Glauber dynamics \cite{GNO}. The idea that Assumption \ref{ass:P} is useful in the context of Green's function estimates in stochastic homogenization goes back to an unpublished manuscript of Naddaf-Spencer. We quote a well-known result which immediately provides lots of examples satisfying this assumption. 

\be{lemma}[The case of i.i.d.\ coefficients; see e.g.\ Lemma 1 in \cite{MO1}]
\label{lm:iid}
If the coefficients $\l(a(e)\r)_{e\in \mathbb E^d}$ are chosen in an independent and identically distributed (i.i.d.) way with values in $[\lam,1]$, then the induced measure $\mathbb P$ on $\curly{A}_{\lam}$ satisfies Assumption~\ref{ass:P} (with $\rho=1/8$).
\e{lemma}

\be{remark}
The logarithmic Sobolev inequality \eqref{eq:LSI} is a slightly weaker version of the more standard one in which the oscillation is replaced by the partial derivative $\frac{\partial \zeta}{\partial a(e)}$. The version \eqref{eq:LSI}  has the slight advantage that the above lemma holds for the most general choices of i.i.d.\ coefficient fields \cite{MO1}.
\e{remark}

We are now ready to state the main result in the probabilistic setting, which provides a pointwise upper bound on the random function $w_G:\Om\times\Z^d\to\R_+$ with probability $1$. Here $\Om$ denotes the underlying probability space. We will often suppress the $\om$-dependence of $w_G$.

\be{theorem}[Pointwise upper bounds]\label{thm:mainrandom}
Let $d\geq 3$, and suppose Assumption \ref{ass:P} holds. For every $p\geq 1$, there exists $C_{d,\lam,p}>0$ so that
\beq\label{eq:wGannealed}
\qmexp{w_G(x)^p}^{1/p}\leq C_{d,\lam,p} (1+|x|)^{-2}, \qquad x\in\Z^d.
\eeq
Moreover, for every $\eps>0$, there exists  finite sets $K_\eps(\om)\subseteq \Z^{d}$ so that
\beq\label{eq:wGprob1}
w_G(\om,x)\leq (1+|x|)^{-2+\eps},\qquad  \om\in \Om,\, x\in\Z^d\setminus K_{\eps}(\om)
\eeq
holds with probability $1$. 
\e{theorem}

Theorem \ref{thm:mainrandom} is proved in Section \ref{sect:mainrandompf}.

The first estimate \eqref{eq:wGannealed} is a pointwise bound on moments of the Hardy weight $w_G$. The second part of the statement holds pointwise with probability $1$, at the modest price of an $\eps$-loss in the exponent. The proof is given in Section~\ref{sect:mainrandompf}.\medskip


Since $w_G$ is an optimal Hardy weight, this theorem yields information on the best-possible decay of an arbitrary Hardy weight via Proposition~\ref{prop:wGw}. 

\be{corollary}[Pointwise bounds on all Hardy weights]\label{cor}
Let $d\geq 3$ and suppose Assumption \ref{ass:P} holds. Let $p>0$ and let $w:\Om\times \Z^d\to\R_+$ be a function so that for some $C(\om)>0$,
\beq\label{eq:cor}
w(\om,x)\geq C(\om) (1+|x|)^{-p},\qquad  x\in\Z^d
\eeq
holds with positive probability. If $w$ is a Hardy weight, then  $p\geq 2$.
\e{corollary}

\be{proof}
We prove the contrapositive statement. Suppose that \eqref{eq:cor} holds for some $p<2$ with positive probability. We have
$$
\mathbb P\l(\inf_{x\in\Z^d}w(x)(1+|x|)^p>0\r)>0.
$$
Let $\eps_0=\frac{2-p}{2}>0$ and apply \eqref{eq:wGprob1} of Theorem \ref{thm:mainrandom} to obtain
$$
\mathbb P\l(\inf_{x\in\Z^d}w_G(x)(1+|x|)^{2-\eps_0}<\infty\r)=1.
$$
On the positive-probability event where both estimates hold,  one can use $p<2-\eps_0$ and apply Proposition \ref{prop:wGw}, with $K$ taken to be an appropriate discrete ball, to conclude that $w$ is not a Hardy weight. This proves Corollary \ref{cor}.
\e{proof}

\subsection{Lower bounds for i.i.d.\ coefficients}\label{ssect:iid}
The inverse-square upper bounds on moments in Theorem \ref{thm:mainrandom} can be matched by lower bounds in the i.i.d.\ setting. In fact, one even obtains the (random) scaling limit of the optimal Hardy weight. 
We employ the framework of \cite{GM,MN,MO3} in which the i.i.d.\ random variables are functions of i.i.d.\ Gaussians. This is a purely technical assumption which simplifies some aspects compared to general i.i.d.\ random variables.

Fix $\lam\in (0,1]$. Let $F:\R\to [\lam,\lam^{-1}]$ be a function satisfying
$$
F\in C^2(\R),\qquad F,F''\in L^\infty(\R).
$$
Let $(\zeta_e)_{e\in \mathbb E^d}$ be a family of i.i.d.\ standard Gaussians. We define the i.i.d.\  coefficient field $a\in \mathcal A_\lam$ by
\beq\label{eq:aiid}
a(e)=F(\zeta_e).
\eeq

Our main result in this setting, Theorem \ref{thm:iid}, is the matching moment lower bound to Theorem \ref{thm:mainrandom}.


\be{theorem}[Pointwise lower bounds]\label{thm:iid}
\mbox{}
Let $d\geq 3$. Let $p\in (\frac{1}{2},\infty)$. There is a constant $C_{d,\lam,p}>0$ and a radius $R_{d,\lam,p}>0$ so that
\beq\label{eq:mainrandompert1}
\langle w_G(x)^p\rangle^{1/p} \geq C_{d,\lam,p} (1+|x|)^{-2}, \qquad  |x|\geq R_{d,\lam,p}.
\eeq
For $d\geq 5$, the bound holds over all of $\Z^d$. 
\e{theorem}

Theorem \ref{thm:iid} is proved in Section \ref{sect:iidpf}. 

For i.i.d.\ random coefficients, we can combine Theorems \ref{thm:mainrandom} and \ref{thm:iid} to conclude that, e.g., for $p\geq 1$, there exist constants $C_{d,\lam,p} ,C_{d,\lam,p}' >0$ so that
\begin{align*}
	C_{d,\lam,p} (1+|x|)^{-2}\leq \langle w_G(x)^p\rangle^{1/p} \leq C_{d,\lam,p}' (1+|x|)^{-2}
\end{align*}
for all sufficiently large $x$.

 We see that \textit{Theorems \ref{thm:mainrandom} and \ref{thm:iid} confirm the expected inverse-square behavior of the optimal Hardy weight $w_G(x)$ after averaging/in the sense of moments}.

\begin{remark}\label{rmk:iid}
(i) 
Note that in dimensions $d\geq 5$ Theorem \ref{thm:iid} establishes a \textit{global} lower bound on the Hardy weight, which is positive everywhere on $\Z^d$. A priori nothing stops the optimal Hardy weight $w_G:\Z^d\to [0,\infty)$ from having zeros. The strict positivity we prove here for $d\geq 5$ is the first that globally limits the vanishing of the optimal Hardy weight. This aspect relies on a non-standard but elementary fact about the free Green's function; see Appendix \ref{app:G0}. 

(ii)
Analogous bounds hold in the continuum for coefficients with a unit range of dependence. For this, one can use the results of \cite[Chapters 5 \& 9]{AKM}, in particular Theorem 5.32. Since our focus is on the discrete setting, these straightforward extensions are left to the interested reader.

(iii) We describe an open problem. The first two steps in the proof of Theorem~\ref{thm:iid} suggest that it may be possible to obtain the (genuinely random) asymptotics of $w_G(x)$ as $|x|\to\infty$, in the sense of identifying the scaling limit of their distribution after spatial averaging. More precisely, Lemmas \ref{lm:tildew} and \ref{lm:hatw}, show that for studying the leading-order asymptotics of $w_G(x)$ it suffices to consider $\hat w(x)$ defined in \eqref{eq:hatwdefn}. One may then hope to understand the asymptotics of $\hat w(x)$ by making use of results identifying the correlations of the Gaussian scaling limit of correctors in the i.i.d.\ setting \cite{AKM,GM,MN,MO3}. Considering the nature of the representation \eqref{eq:hatwdefn}, this would first require a precise study of the correlations between the random coefficients and products of correctors.
\end{remark}

Since we have control over moments of the pointwise optimal Hardy weight $w_G(x)$, the second moment method can be used to obtain pointwise lower bounds  with controlled positive probability.

\be{corollary}[Pointwise lower bounds with positive probability]\label{cor:smm}
\mbox{}
Let $d\geq 3$. There are constants $c_{d,\lam},C_{d,\lam}>0$ so that the following holds. 
\be{enumerate}
\item[(i)] For $d\in \{3,4\}$, there is a radius $R_{d,\lam}>0$ so that \begin{align*}	
\mathbb P\l(w_G(x)>C_{d,\lam} (1+|x|)^{-2}\r)\geq c_{d,\lam}>0, \qquad  |x|\geq R_{d,\lam}.
\end{align*}
\item[(ii)] Let  $d\geq 5$. Then
\begin{align*}	
\mathbb P\l(w_G(x)>C_{d,\lam} (1+|x|)^{-2}\r)\geq c_{d,\lam}>0,\qquad  x\in\Z^d.
\end{align*}
\e{enumerate}
\e{corollary}

\be{proof}[Proof of Corollary \ref{cor:smm}] This follows from the Paley-Zygmund inequality which says that for all $\theta\in(0,1)$ 
$$
\mathbb P\big(w_G(x)>\theta \qmexp{w_G(x)}\big)\geq (1-\theta)^2 \frac{\qmexp{w_G(x)}^2}{\qmexp{w_G(x)^2}}.
$$
We apply this with $\theta=\frac{1}{2}$ and use Theorem \ref{thm:iid} with $p\in \{1,2\}$.
\e{proof}

\section{General pointwise estimates}\label{sect:general}
In this short section, we summarize some general pointwise estimates that are used in more specific situations later and are collected here for convenience. We derive from the formula for $ w_{{G}} $ given by \eqref{eq:wG} upper and lower bounds which directly involve the discrete derivative $|G(x)-G(y)|$ for $x\sim y$ as summarized in the following proposition. These bounds hold for all elliptic graphs and without any averaging. 

Recall that $E>0$ is a constant so that \eqref{E} holds.

\be{proposition}[General pointwise bound]\label{prop:wGbounds}
Assume that $b$ is a connected transient graph over $X$ which satisfies \eqref{E}. Then,
\begin{align*}
w_G(x)
\leq& 1_{o}(x)+\frac{1}{G(x)^2}
	\sum_{\substack{y\in X}}b(x,y)(G(x)-G(y))^2,\\
	w_G(x)
\geq& 1_{o}(x)+(1+E^{-1/2})^{-1}\frac{1}{G(x)^2}
	\sum_{\substack{y\in X}}b(x,y)(G(x)-G(y))^2.
\end{align*}
\e{proposition}

We mention in passing that these bounds yield rough decay estimates on $w_G$ that do not require any information on $\nabla G$, though we will not use these rough bounds in the following. Indeed, from $\q(f)=2\scp{f}{Lf}$ and the defining property of the Green's function we have that
$$
\sum_{x\in X} w_G(x) G(x)^2 \leq C G(o) <\infty.
$$
which for example implies a weak decay estimate on $\Z^3$ via the Nash-Aronson bound \eqref{eq:aronsondiscrete}, (see also \cite{PV}).

The proof of Proposition \ref{prop:wGbounds} uses the following basic comparison property of the Green function of strongly elliptic graphs with bounded combinatorial degree.

\begin{lemma}\label{lm:basic}
Let $ b $ be a connected transient graph over $ X $  which satisfies the ellipticity condition \eqref{E}. Then, for all $ x\sim y $
	\begin{align*}
	G(x)\geq E  G(y).
	\end{align*}
\end{lemma}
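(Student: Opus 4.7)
The plan is to exploit the fact that the Green's function $G=G(o,\cdot)$ satisfies $\La G = 1_o$ pointwise on $X$, which follows from the construction of $G$ as the monotone limit $G = \lim_{\alpha\searrow 0}(L+\alpha)^{-1}1_o$ together with the local finiteness of the graph: locality allows me to interchange the limit with the finite sum in \eqref{eq:Ldefn}, while the resolvent identity $(L+\alpha)(L+\alpha)^{-1}1_o = 1_o$ passes to the limit using $0\le (L+\alpha)^{-1}1_o \le G$ and transience of the graph.

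Once this pointwise equation is established, the proof reduces to a one-line harmonicity argument. For any $v\in X$ I would write
\begin{equation*}
\deg(v)\,G(v) - \sum_{z\in X} b(v,z)\,G(z) \;=\; \La G(v) \;=\; 1_o(v) \;\ge\; 0,
\end{equation*}
so, discarding all terms on the right except the contribution of a single neighbor $w\sim v$ and using $G\ge 0$,
\begin{equation*}
\deg(v)\,G(v) \;\ge\; \sum_{z\in X} b(v,z)\,G(z) \;\ge\; b(v,w)\,G(w).
\end{equation*}

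At this point I invoke the ellipticity condition \eqref{E}, which says $b(v,w)\ge E\deg(v)$ for any edge $v\sim w$. Dividing through by $\deg(v)$ (which is strictly positive since $v$ has at least one neighbor $w$) gives
\begin{equation*}
G(v) \;\ge\; \frac{b(v,w)}{\deg(v)}\,G(w) \;\ge\; E\,G(w),
\end{equation*}
which is the claim upon setting $v=x$, $w=y$. The argument is symmetric in the two endpoints of the edge, so both $G(x)\ge E G(y)$ and $G(y)\ge E G(x)$ follow from the same estimate.

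I do not anticipate any genuine obstacle; the only point requiring mild care is the justification of the pointwise distributional identity $\La G = 1_o$, but this is standard given local finiteness (which itself follows from \eqref{E}) and the monotone convergence of the $\alpha$-resolvents to $G$ on the finitely many neighbors appearing in $\La G(v)$. The presence of the $1_o$ source term is harmless because it only improves the inequality at $v=o$.
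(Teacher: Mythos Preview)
Your proof is correct and follows essentially the same approach as the paper: both use the superharmonicity $\La G = 1_o \ge 0$ to obtain $\deg(x)\,G(x) \ge \sum_z b(x,z)G(z) \ge b(x,y)G(y)$, and then apply the ellipticity condition \eqref{E} to conclude $G(x)\ge E\,G(y)$. Your version is slightly more explicit in justifying the pointwise identity $\La G = 1_o$, which the paper simply states.
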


\begin{proof}[Proof of Lemma \ref{lm:basic}]
Note that the Green's function is superharmonic, more specifically, it satisfies
\begin{align*}
\La G(o,\cdot)=1_{o}.
\end{align*}
 By the virtue of the ellipticity condition \eqref{E} we have for all $ x\sim y $
\begin{align*}
 G(x) \ge\frac{1}{\sum_{z\in X}b(x,z) } \sum_{z\in X}b(x,z) G(z)\ge E G(o,y)
\end{align*}
which proves Lemma \ref{lm:basic}.
\end{proof}

\begin{proof}[Proof of Proposition \ref{prop:wGbounds}]
For the upper bound, we note that
$$
|G(x)^{1/2}-G(y)^{1/2}|=\frac{|G(x)-G(y)|}{G(x)^{1/2}+G(y)^{1/2}}
\leq
\frac{|G(x)-G(y)|}{G(x)^{1/2}}.
$$
For the lower bound, we use that by Lemma \ref{lm:basic}, we have $G(y)\leq E^{-1}G(x)$ and so
$$
|G(x)^{1/2}-G(y)^{1/2}|=\frac{|G(x)-G(y)|}{G(x)^{1/2}+G(y)^{1/2}}	\geq (1+E^{-1/2})^{-1}\frac{|G(x)-G(y)|}{G(x)^{1/2}}.
$$
Squaring both sides and applying the resulting estimates to the formula \eqref{eq:wG} for $ w_{G} $  establishes Proposition \ref{prop:wGbounds}.	
\e{proof}

\section{Proof of inverse-square bounds with spatial averaging}
\label{sect:mainspatialpf}
\subsection{Proof of Theorem \ref{thm:mainspatial} }

\subsubsection{Proof of the lower bound in \eqref{eq:mainspatial}.}
Recall the definition of a sector 
$$
\curly{S}^{j,\al}_{R,\ell}=\setof{x\in\Z^d}{\scp{x}{e_j}>(1-\al)|x|,\;R\leq |x|\leq \ell R} 
$$
 for $ j\in\{1,\ldots,d\} $ and $\al, R,\ell>0 $.

By the lower bound in Proposition \ref{prop:wGbounds}, the upper Nash-Aronson bound \eqref{eq:aronsondiscrete} and $a\geq \lam$, we have
$$
\begin{aligned}
\sum_{\curly{S}^{j,\al}_{R,\ell}} w_G(x)
&\geq (1+E^{-1/2})^{-1}\sum_{x\in\curly{S}^{j,\al}_{R,\ell}} \frac{1}{G(x)^2}\sum_{\substack{i=1,\ldots,d\\ s=\pm1}}  a([x,x+se_j])|G(x)-G(x+se_i)|^2\\
& \geq (1+E^{-1/2})^{-1}C_{d,\lam}^{-2} \lambda (1+(\ell_dR)^{d-2})^{-2}	\sum_{\curly{S}^{j,\al}_{R,\ell}}\sum_{\substack{i=1,\ldots,d\\ s=\pm1}}  |G(x)-G(x+se_i)|^2\\
& \geq C'_{d,\lam} R^{2(2-d)}	\sum_{x\in\curly{S}^{j,\al}_{R,\ell}} |G(x)-G(x+e_j)|^2.
\end{aligned}
$$
The following lemma gives a general lower bound on sectorial averages of Green's function derivatives for all elliptic coefficient fields. 
 



\be{lemma}[Lower bound on sectorial averages]\label{lm:onedirection}
Let $d\geq 3$. There exist constants $\ell=\ell_d>0$ and $C_{d,\lambda}>1$ such that for every $a\in\curly{A}_{\lam}$, every $1\leq j\leq d$, every $ \al\in(0,1) $ and every radius $R\geq 1$, 
$$
R^{-d}\sum_{x\in \curly S^{j,\al}_{R,\ell}} |G(x)-G(x+e_j)|^2 \geq  C_{d,\lam} R^{2-2d}.
$$
\e{lemma}

Lemma \ref{lm:onedirection} is proved below. It allows us to conclude that
$$
R^{-d} \sum_{x\in\curly{S}^{j,\al}_{R,\ell}} w_G(x)
\geq C''_{d,\lam} R^{-2},	
$$
which is the lower bound in \eqref{eq:mainspatial}. To complete the proof, it remains to prove Lemma~\ref{lm:onedirection}.

\textit{Proof of Lemma~\ref{lm:onedirection}}
\label{sect:onedirectionpf}
Let $\al>0$. We consider the case $j=1$ without loss of generality. We let $\ell>0$ to be determined later. The idea of the proof is that by the Nash-Aronson bounds \eqref{eq:aronsondiscrete}, we know that $G$ must have decayed somewhat between the interior and the exterior boundary of the sector.

In a preliminary step, we replace the sector
$$
\curly{S}^{1,\al}_{R,\ell}=\setof{x\in\Z^d}{\scp{x}{e_1}>(1-\al)|x|,\;R\leq |x|\leq \ell R}
$$
by a cuboid. Observe that there are constants $c_{in}, c_{out},c_{orth}>0$ depending only on $d$, so that the cuboid
$$
\mathrm{Cub}=\setof{(x_1,\ldots,x_d)\in \Z^d}{c_{in} R\leq x_1\leq c_{out} \ell R \text{ and } |x_i|\leq c_{orth} \al R\mbox{ for all } i\ge 2}
$$
is contained in the sector $\curly{S}^{1,\al}_{R,\ell}$. Without loss of generality, we assume that $c_{in} R$ and $c_{out} \ell R$ are integers.  Note further that for any fixed $\ell$, $\mathrm{Cub}$ has cardinality of order $R^d$ as $R\to\infty$. We conclude that it suffices to prove the claim with $\mathrm{Cub}$ in place of $\curly{S}^{1,\al}_{R,\ell}$.

We introduce the inner and outer faces of the cuboid
$$
\begin{aligned}
F_{in}=&\setof{x=(c_{in} R,x_2,\ldots,x_d)}{|x_i|\leq c_{orth} \al R\,\mbox{ for all } i\ge 2},\\ 
F_{out}=&\setof{x=(c_{out} \ell R,x_2,\ldots,x_d)}{|x_i|\leq c_{orth} \al R\,\mbox{ for all }i\ge 2}.
\end{aligned}
$$

By the Nash-Aronson bounds \eqref{eq:aronsondiscrete}, we have
$$
\begin{aligned}
G(x)\geq& C_{d,\lambda}^{-1} (1+|x|)^{2-d}\geq C'_{d,\lambda} c_{in}^{2-d} R^{2-d},\qquad  x\in F_{in},\\
G(x)\leq& C_{d,\lambda} (1+|x|)^{2-d}\leq C_{d,\lambda} c_{out}^{2-d} (\ell R)^{2-d},\qquad  x\in F_{out}.
\end{aligned}
$$
Given $x\in F_{in}$, note that the point $x+R(c_{out}\ell-c_{in})e_1\in F_{out}$. By choosing $\ell=\ell_d>0$ sufficiently large, depending on $d$ but not on $R$, these bound imply that  the Greens' function differs by order $R^{2-d}$ between these two points. Namely, there exists  $C''_{d,\lambda} >0$ so that
\beq\label{eq:glb}
G(x)-G(x+Rc e_1)\geq C''_{d,\lambda} R^{2-d}, \qquad   x\in F_{in}.
\eeq
where we introduced $c:=c_{out}\ell_d-c_{in}$.

Since $F_{in}$ contains an order of $(c_{orth}\al R)^{d-1}\sim R^{d-1}$ many sites, this implies
$$
\sum_{x\in F_{in}} \l(G(x)-G(x+Rce_1)\r)\geq C'''_{d,\lambda} R.
$$

By telescoping (recall that $cR\in\Z_+$) and the Cauchy-Schwarz inequality, we can upper bound the left-hand side as follows
\beq\label{eq:telescopic}
\begin{aligned}
&\sum_{x\in F_{in}} \l(G(x)-G(x+Rce_1)\r)\\
&=\sum_{x\in F_{in}} \sum_{n=1}^{cR}\l(G(x+(n-1)e_1)-G(x+ ne_1)\r)\\
&\leq C_{d} R^{d/2}\sqrt{\sum_{x\in F_{in}} \sum_{n=1}^{cR} |G(x+(n-1)Re_1)-G(x+ ne_1)|^2}\\
&\leq C_{d} R^{d/2} \sqrt{\sum_{x\in \mathrm{Cub}} |G(x)-G(x+e_1)|^2}.
\end{aligned}
\eeq
We combine this with \eqref{eq:glb} to conclude
$$
C_{d,\lambda}'' R\leq  R^{d/2}\sqrt{\sum_{x\in \mathrm{Cub}}  |G(x)-G(x+e_1)|^2}.
$$
Since it suffices to prove the claim for $\mathrm{Cub}$, Lemma \ref{lm:onedirection} is proved.\qed

\subsubsection{Proof of the upper bound in \eqref{eq:mainspatial}.} We first claim the following auxiliary bound on the average over a dyadic annulus
\begin{align}
\label{eq:mainspatialub}
 R^{-d}\sum_{\substack{x\in\Z^d:\\ R\leq |x|\leq 2R}} w_G(x)
 \leq& c_{d,\lam} R^{-2}.
 \end{align}
Let $a\in\curly{A}_{\lam}$ and $R\geq 1$ be arbitrary and recall the setting given by \eqref{eq:setting2}. By the upper bound in Proposition \ref{prop:wGbounds}, the lower Nash-Aronson bound in \eqref{eq:aronsondiscrete} and $a\leq 1$, we have
\beq\label{eq:justaronson}	
\begin{aligned}
\sum_{\substack{x\in\Z^d:\\ R\leq |x|\leq 2R}} w_G(x)
&\leq \sum_{\substack{x\in\Z^d:\\ R\leq |x|\leq 2R}} \frac{1}{G(x)^2}\sum_{\substack{e\in\mathbb{Z}^d\\|e|=1}}
a([x,x+e])\l|G(x)-G(x+e)\r|^2\\
& \leq  C_{d,\lam}^2 R^{2(d-2)}	\sum_{\substack{x\in\Z^d:\\ R\leq |x|\leq 2R}}\sum_{\substack{e\in\mathbb{Z}^d\\|e|=1}} |G(x)-G(x+e)|^2.
\end{aligned}
\eeq

To bound the annular averages of discrete derivatives of $G$, we employ \cite[Lemma~1.4]{MO2}, which gives the existence of a constant $\tilde C_{d,\lam}>0$ such that for every $a\in\curly{A}_{\lam}$ and every radius $R\geq 1$,
$$
R^{-d}\sum_{\substack{x\in\Z^d:\\ R\leq |x|\leq 2R}}\sum_{\substack{e\in\mathbb{Z}^d\\|e|=1}} |G(x)-G(x+e)|^2\leq \tilde C_{d,\lam}R^{2(1-d)}.
$$
Using this estimate on \eqref{eq:justaronson} proves the desired auxiliary bound \eqref{eq:mainspatialub}. 

To conclude the upper bound in \eqref{eq:mainspatial}, we fix $\ell_d$ as determined by the lower bound and cover the annulus $\{x:\, R\leq|x|\leq  \ell_d R\}$ with finitely many dyadic annuli to obtain
$$
 R^{-d}\sum_{\substack{x\in\curly S^{j,\al}_{R,\ell_d}}} w_G(x)
 \leq R^{-d}\sum_{\substack{x\in\Z^d:\\ R\leq |x|\leq \ell_d R}} w_G(x)
 \leq c_{d,\lam} R^{-2},
$$
since $\curly S^{j,\al}_{R,\ell_d} \subseteq \{x:\, R\leq|x|\leq \ell_d R\}$. This completes the proof of Theorem \ref{thm:mainspatial}.
\qed

\subsection{Proof of Proposition \ref{prop:continuum}}
\label{sect:continuumpf}
The proof follows along the same lines as Theorem \ref{thm:mainspatial}. Here we only summarize the necessary modifications.

First, in the continuum, the Hardy weight given in \eqref{eq:WG} is already of the correct form, so an analog of Proposition \ref{prop:wGbounds} is not needed. 

For the upper bound, we then use the Nash-Aronson bound \eqref{eq:aronson} in the original continuum version and note that Lemma 1.4 of \cite{MO2} is also proved for the continuum in Section 2 of \cite{MO2}; see also Lemma 2.1 in \cite{LSW63}. For the lower bound, the comparison between sectors and cuboids is analogous and for a continuum analog of Lemma \ref{lm:onedirection}, it suffices to replace the telescopic sum in \eqref{eq:telescopic} by an application of the fundamental theorem of calculus. The details are left to the reader.
\qed

\section{Inverse-square upper bounds for ergodic coefficients}
\label{sect:mainrandompf}
In this section, we prove Theorem \ref{thm:mainrandom} for general ergodic coefficients subject to Assumption \ref{ass:P}. 

\be{proof}[Proof of Theorem \ref{thm:mainrandom}]
We first prove the annealed estimate \eqref{eq:wGannealed}.  Let $p\geq 1$. The case $ x=0 $ is clear by   Proposition~\ref{prop:wGbounds}. For $x\neq 0$, by  Proposition~\ref{prop:wGbounds}  and the Nash-Aronson bound \eqref{eq:aronsondiscrete}, we have
\begin{align*}
 \qmexp{w_G(x)^p}
&\leq
\left\langle\frac{1}{G(x)^{2p}}
\left(\sum_{\substack{e\in\mathbb{Z}^d, |e|=1}} a([x,x+e])|G(x)-G(x+e)|^2\right)^{p}
	\right\rangle\\
		&
	\leq
\left\langle
C_{d,\lam}^{2p} \l(1+|x|\r)^{2p(d-2)}
\left(\sum_{\substack{e\in\mathbb{Z}^d,|e|=1}} a([x,x+e])|G(x)-G(x+e)|^2\right)^{p}
	\right\rangle.
\end{align*}
To estimate the discrete derivative of $G$, we use $|a|\leq 1$ and \cite[Theorem 1]{MO1} which applies under Assumption \ref{ass:P} and gives
\beq\label{eq:MO1}
\qmexp{|G(x)-G(x+ e)|^{2p}}^{\frac{1}{2p}}\leq C'_{d,\lam,\rho,p} (1+|x|)^{1-d},
\eeq
for every  $ e\in\mathbb{Z}^{d} $, $|e|=1$. Using this and the elementary inequality $\l(\sum_{j=1}^{2d} b_j\r)^{p}\leq (2d)^{p-1} \sum_{j=1}^{2d} b_j^p$, we find
$$
\begin{aligned}
 \qmexp{w_G(x)^p}
\leq& 
C_{d,\lam}^{2p} (2d)^{p-1} \l(1+|x|\r)^{2p(d-2)}\sum_{\substack{e\in\mathbb{Z}^d,|e|=1}} \qmexp{|G(x)-G(x+e)|^{2p}}\\
\leq & C_{d,\lam}^{2p}(2d)^{p}d(C'_{d,\lam,\rho,p})^{2p} (1+|x|)^{-2p}.
\end{aligned}
$$
This proves \eqref{eq:wGannealed} when $x\neq 0$. 

It remains to prove the almost-sure estimate \eqref{eq:wGprob1}. Let $\eps>0$ and set $p_\eps=\frac{2d}{\eps}$. By Markov's inequality and \eqref{eq:wGannealed}, we have
$$
\sum_{x\in \Z^d}\mathbb P\l(w_G(x)\geq (1+|x|)^{-2+\eps}\r)
\leq 
C_{d,\eps,\lam}\sum_{x\in \Z^d}(1+|x|)^{-2\eps p_\eps}<\infty.
$$
Now, the Borel-Cantelli lemma implies that $w_G(x)\geq (1+|x|)^{-2+\eps}$ can occur for at most finitely many $x\in \Z^d$.  This proves Theorem \ref{thm:mainrandom}.
\e{proof}

\section{Proof of Theorem \ref{thm:iid} for i.i.d.\ coefficients}
\label{sect:iidpf} 
We come to the proof of Theorem~\ref{thm:iid} for which we assume the coefficients are generated by \eqref{eq:aiid}. In the following, we suppress $\omega$ from the notation. 

Without loss of generality, we let $x\neq 0$. We begin by rewriting \eqref{eq:wG} as
\begin{align*}
w_G(x)&=\frac{1}{G(x)}
	\sum_{\substack{e\in \Z^d, |e|=1}}a([x,x+e]) (G(x)^{\frac{1}{2}}-G(x+e)^{\frac{1}{2}})^{2}\\
	&=\frac{1}{G(x)} \sum_{\substack{e\in \Z^d, |e|=1}} 
	F(\zeta_{[x,x+e]})\l(\frac{G(x)-G(x+e)}{\sqrt{G(x)}+\sqrt{G(x+e)}}\r)^{2}.
\end{align*}  
\subsection{Step 1: Approximating $G$ by the homogenized Green's function}
 In this step, we replace the Green's function $G$ (but not its discrete derivative) to leading asymptotic order by the homogenized Green's function $G_h$ whose definition we recall now. We fix a function $f:\R^d\to \R^d$ and for $\eps>0$ we define the rescaled discrete gradient by,
\begin{align*}
	\nabla_\eps :C(\eps\Z^{d})\to C(\eps \Z^{d})^{d}, \quad 
	\nabla_\eps f(x )=
	\frac{1}{\eps}\left(\begin{matrix}
		f(x+\eps e_{1})-f(x)\\
		\vdots\\
		f(x+\eps e_{d})-f(x)
	\end{matrix}\right).
\end{align*}
We write $\nabla_\eps^*$ for its $\ell^2(\eps\Z^d)$-adjoint. For uniformly elliptic and sufficiently ergodic coefficients, so in particular for $a(e)$ given by \eqref{eq:aiid}, the solution $u_\eps(x)=\eps^2 u(x/\eps)$ of the rescaled equation 
$$
\nabla_\eps^* a\l(\frac{\cdot}{\eps}\r) \nabla_\eps u_\eps(x)=f(x), \qquad x\in \eps\Z^d, 
$$
converges as $\eps\to 0$ to a function $u_{h}:\R^d\to\R$ solving the \textit{homogenized} equation
$$
\mathrm{div}(a_{h} \nabla u_{h})(x)=f(x), \qquad x\in \R^d, 
$$
where $a_{h}\in \R^{d\times d}$ is a symmetric \textit{constant} coefficient matrix and $\lam\leq a_{h}\leq \lam^{-1}$ is uniformly elliptic. (This fact is known as qualitative (i.e., non-quantitative) homogenization and goes back to Papanicolau-Varadhan \cite{PV0}.) We write $G_h:\R^d\to\R_+$ for the Green's function of $\mathrm{div}(a_{h} \nabla)$. It is called the homogenized Green's function and by a change of variables in Fourier space it is of the form
\beq\label{eq:Ghformula}
G_h(x)=C_{univ} (\det(a_h))^{d/2} |a_{h}^{-1/2} x|^{2-d}
\eeq
with $C_{univ}>0$ an explicit universal constant. 
 
We introduce the auxiliary function
$$
\tilde w(x)=\frac{1}{4G_h(x)^2} \sum_{\substack{e\in \Z^d, |e|=1}} 
	F(\zeta_{[x,x+e]}) (G(x)-G(x+e))^2.
$$

\be{lemma}\label{lm:tildew}
Let $p\geq 1 $. For every $\de\in (0,1)$, there exist $C_{d,\de,\lam,p}>0$ so that for all $x\neq 0$,
\beq
\qmexp{|w_G(x)-\tilde w(x)|^p}^{\tfrac{1}{p}}\leq C_{d,\de,\lam,p} |x|^{\tfrac{\de-1}{2}} |x|^{-2}.
\eeq
\e{lemma}

\be{proof}[Proof of Lemma \ref{lm:tildew}]
For convenience, we work with the strongest available version of homogenization of the Green's function which was developed in \cite[Chapters 8 \& 9]{AKM} based on scalar de Giorgi-Nash $L^\infty$ estimates.  (An alternative would be to use the earlier concentration bound \cite[Corollary 1]{MO1}.) While \cite{AKM} works in the continuum setting throughout, the techniques extend to the discrete setting, even to supercritical percolation clusters \cite{DG}. In our case, it suffices to note that the following result holds by taking $\mathfrak{p}=1$ in \cite[Theorem 2]{DG}.

For every $\de>0$, there exist $s=s_{d,\de,\lam},C=C_{d,\de,\lam}>0$ and a non-negative random variable $\mathcal M_\de$ satisfying
\beq\label{eq:exptails}
\mathbb P(\mathcal M_\de\geq R)\leq C \exp\l(-C ^{-1}R^s\r),\qquad R\geq 0,
\eeq
so that for every $x\in\Z$ satisfying $|x|\geq \mathcal M_\de$, we have the homogenization of the Green's function
\beq\label{eq:GFhom}
|G(x)-G_h(x)|\leq C |x|^{1-d+\de}.
\eeq
(Compare the Nash-Aronson bounds \eqref{eq:aronsondiscrete}.) We note that \eqref{eq:Ghformula} and uniform ellipticity of $a_h$ imply that all derivatives of $G_h$ have the correct scaling behavior, i.e., there exists $C>1$ so that
\beq\label{eq:Ghderivest}
C^{-1} |x|^{1-d}\leq |\nabla G_h(x)|\leq C |x|^{1-d}.
\eeq
We now fix $\de>0$. Suppose that $|x|\geq \mathcal M_\de+1$. Then by combining $|F|\leq \lam^{-1}$, \eqref{eq:GFhom}, \eqref{eq:Ghderivest}, and \eqref{eq:aronsondiscrete} with elementary estimates such as
\begin{align*}
	\sqrt{G(x)G(x+e)}-&G_h(x)\\
\leq &
\sqrt{|G(x)-G_h(x)| G_h(x+e)}+\sqrt{G_h(x)}\sqrt{|G(x+e)-G_h(x+e)|}\\
&+\sqrt{G_h(x)}\sqrt{|G_h(x+e)-G_h(x)|}\\
\leq& C |x|^{\tfrac{\de-1}{2}} |x|^{2-d},
\end{align*}
we obtain
\beq\label{eq:tildewcompare}
|w_G(x)-\tilde w(x)|\leq C \frac{|x|^{\tfrac{\de-1}{2}}}{G_h(x)^2} \sum_{\substack{e\in \Z^d, |e|=1}} (G(x)-G(x+e))^2.
\eeq
We fix $p\geq 1$ and write
\begin{multline*}
{\qmexp{|w_G(x)-\tilde w(x)|^p}^{\tfrac{1}{p}}}\\
\leq C_{d,\lam} \max_{x\in \mathbb{Z}^{d},\omega \in \Omega}\{|w_G(x,\omega)|,|\tilde w(x,\omega)|\} \mathbb P(\mathcal M_\de\geq |x|) \\
+\qmexp{|w_G(x)-\tilde w(x)|^p \mathbbm 1_{\{|x|\geq \mathcal M_\de+1\}}}^{\tfrac{1}{p}}.
\end{multline*}
We estimate the first term by \eqref{eq:aronsondiscrete} and  \eqref{eq:exptails} and the second term by \eqref{eq:tildewcompare} and \eqref{eq:MO1}. We obtain
\begin{align*}
\lefteqn{\qmexp{|w_G(x)-\tilde w(x)|^p}^{\tfrac{1}{p}}}\\
&\leq  C \exp\l(-C ^{-1}|x|^{-s}\r) 
+C 2^{1-\tfrac{1}{p}} \frac{|x|^{\tfrac{\de-1}{2}}}{G_h(x)^2} \l(\sum_{\substack{e\in \Z^d, |e|=1}} (G(x)-G(x+e))^{2p}\r)^{\tfrac{1}{p}}\\
&\leq C_{d,\de,\lam,p} |x|^{\tfrac{\de-1}{2}} |x|^{-2}.
\end{align*}
This proves Lemma \ref{lm:tildew}.
\e{proof}

\subsection{Step 2: Approximating $\nabla G$ through correctors}
In Step 2, we express the Green's function gradient that appears in $\tilde w(x)$ through the correctors from stochastic homogenization by using a two-scale expansion. The correctors are defined as follows. For $\xi\in\R^d$, let $\phi_\xi:\Z^d\to\R$ denote the unique stationary solution of
$$
\nabla^* a (\xi+\nabla\phi_\xi)=0
$$
satisfying $\qmexp{\phi_\xi(x)}=0$ for all $x\in\Z^d$. (See \cite{AKM} and \cite[Lemma~2.1]{GO},  \cite[Theorem~3]{Ku} in the discrete setting for proofs that correctors exist.) Denote $\phi_k\equiv \phi_{e_k}$ for $1\leq k\leq d$. We introduce the auxiliary functions
\beq\label{eq:hatwdefn}
\begin{aligned}
\hat w(x)=&\frac{1}{4G_h(x)^2} \sum_{j=1}^d 
\l[ F(\zeta_{[x,x+e_j]}) h_j(x)^2 +F(\zeta_{[x-e_j,x]}) h_j(x-e_j)^2
\r],\\
h_j(x)=&\nabla_j G_h(x)+\sum_{k=1}^d \nabla_k G_h(x) \nabla_j\phi_k(x)
\end{aligned}.
\eeq
and, for $ f:\Z^{d}\to \mathbb{R} $,
\begin{align*}
	\nabla _{j}f(x)=f(x+e_{j})-f(x),\qquad j=1,\ldots,d.
\end{align*}

\be{lemma}\label{lm:hatw}
Let $p\geq 1 $. There exists $C_{d,\lam,p}>0$ so that
$$
\qmexp{|\tilde w(x)-\hat w(x)|^p}^{1/p}\leq  C_{d,\lam,p} \frac{\log |x|}{|x|^{2+1/p}} ,\qquad x\in \Z^d.
$$
\e{lemma}

\be{proof}[Proof of Lemma \ref{lm:hatw}]
By Proposition 4.2 in \cite{GM}, we have the two-scale expansion
\begin{align*}
	\qmexp{\left|\nabla_j G(x)-h_j(x)\right|^2}^{1/2}
\leq C_{d,\lam} \frac{\log |x|}{|x|^d}, \qquad j\in \{1,\ldots,d\}.
\end{align*}
Without loss of generality, we assume $|x|> 2$. Let $p\geq 1$. By  \eqref{eq:aronsondiscrete}, the triangle inequality for $\qmexp{(\cdot)^p}^{1/p}$ and the Cauchy-Schwarz inequality for $\qmexp{(\cdot)^2}^{1/2}$, we obtain
\begin{align*}
\lefteqn{\qmexp{|\tilde w(x)-\hat w(x)|^p}^{1/p}}\\
\leq &\frac{\lam^{-1}}{4G_h(x)^{2}} \sum_{j=1}^d \sum_{t\in \{0,1\}} \qmexp{|(\nabla_j G(x-te_j))^2-h_j(x-te_j)^2|^p}^{1/p} \\
\leq &\frac{C_{d,\lam,p}}{|x|^{4-2d}} \sum_{j=1}^d \sum_{t\in \{0,1\}} \qmexp{|\nabla_j G(x-te_j)-h_j(x-te_j)|^2}^{1/(2p)}\\
&\qquad\qquad\qquad\qquad \qmexp{|\nabla_j G(x-te_j)|^{4p-2}+|h_j(x-te_j)|^{4p-2}}^{1/(2p)} \\
\leq& C_{d,\lam,p}  \frac{(\log|x|)^{1/(2p)}}{|x|^{4-2d+d/p}}\\
&\qquad\max_{1\leq j\leq d,\ t\in \{0,1\}} \l(\qmexp{|\nabla_j G(x-te_j)|^{4p-2}}^{1/(2p)}+\qmexp{|h_j(x-te_j)|^{4p-2}}^{1/(2p)}\r)\\
\leq& C_{d,\lam,p} \frac{\log |x|}{|x|^{2+1/p}}.
\end{align*}
The last estimate uses \eqref{eq:MO1}, \eqref{eq:Ghderivest}, and the fact that the correctors $\phi_k$ are sublinear in the sense that $\qmexp{|\nabla \phi_k(x)|^q}^{1/q}\leq C_{d,\lam,q}$ for all $q\geq 1$, cf.\ \cite[Proposition~2.1]{GO}. This proves Lemma \ref{lm:hatw}.
\e{proof}

\subsection{Step 3: Moment lower bounds for large $|x|$} 
Let $p\geq 1$ and $\de=\tfrac{1}{2}$. Combining Lemmas \ref{lm:tildew} and \ref{lm:hatw} with the triangle inequality for $\qmexp{(\cdot)^p}^{1/p}$, we obtain
\beq\label{eq:preciseerror}
\begin{aligned}
\qmexp{w_G(x)^p}^{1/p}\geq& \qmexp{\hat w(x)^p}^{1/p}- \frac{C_{d,\lam,p}}{|x|^2} \max\l\{\frac{1}{|x|^{1/4}},\frac{\log|x|}{|x|^{1/p}}\r\}\\
\geq& \qmexp{\hat w(x)^p}^{1/p}-\frac{C_{d,\lam,p}}{|x|^{2+\min\{\tfrac{1}{4},\tfrac{1}{2p}\}}}.
\end{aligned}
\eeq
To estimate the main term $\qmexp{\hat w(x)^p}^{1/p}$, we use $F(\zeta_e)\geq \lam$, the deterministic bound \eqref{eq:Ghderivest}, and Jensen's inequality applied to the convex function $z\mapsto z^{2p}$. This gives
$$
\begin{aligned}
\qmexp{\hat w(x)^p}
\geq& C_{d,\lam,p} (1+|x|)^{2p(d-2)}\qmexp{\l(\sum_{j=1}^d h_j(x)^2 +h_j(x-e_j)^2\r)^p}\\
\geq& C_{d,\lam,p} (1+|x|)^{2p(d-2)}\sum_{j=1}^d \qmexp{h_j(x)^{2p}}\\
\geq& C_{d,\lam,p} (1+|x|)^{2p(d-2)}  \sum_{j=1}^d \qmexp{h_j(x)}^{2p}.
\end{aligned}
$$
Recalling \eqref{eq:hatwdefn} and $\qmexp{\phi_k(x)}=0$ and using \eqref{eq:Ghderivest} once again, we find
$$
\begin{aligned}
\qmexp{\hat w(x)^p}
\geq& C_{d,\lam,p} (1+|x|)^{-2p}.
\end{aligned}
$$
Combining this with \eqref{eq:preciseerror}, we have shown that there exists a radius $R_{d,\lam,p}>0$ so that
\beq\label{eq:largex}
\begin{aligned}
\qmexp{w_G(x)^p}^{1/p}
\geq C_{d,\lam,p} (1+|x|)^{-2}\qquad  |x|\geq R_{d,\lam,p}.
\end{aligned}
\eeq
This proves the assertion \eqref{eq:mainrandompert1} of Theorem \ref{thm:iid}. For small distances and $d\geq 5$, we can prove a uniform lower bound by different techniques, namely perturbation theory for the Green's function, as we described next.

\subsection{Step 4: Moment lower bounds for small $|x|$} 
Assume that $d\geq 5$.
This proves \eqref{eq:mainrandompert1}. Given \eqref{eq:mainrandompert1}, extending the estimate $\qmexp{w_G(x)^p}^{1/p}
\geq C_{d,\lam,p} (1+|x|)^{-2}$ to all $ x $ requires a pointwise positive lower bound on $\qmexp{w_G(x)^p}$ over the ball $B_{R_{d,\lam,p}}$ where
$$
B_{R}=\setof{x\in\Z^d}{|x|<R}.
$$
 
Recall that $G_0$ denotes the free Green's function. The key idea for proving this is that with positive probability $G$ is uniformly close to some $\beta^{-1} G_0$ on the entire ball $B_{R_0}$.

\be{lemma}[Green's function comparison]
\label{lm:GFcomparison}
Let $d\geq 5$ and $\eps,R_0>0$. There exists $\beta\in [\lam,1]$ and $p_{\lam,\eps,R_0}>0$ so that
\begin{align*}
	\mathbb P\l(\sup_{x_1\in B_{2R_0}}|G(x_1)-\beta^{-1} G_0(x_1)|\leq  \eps\r)\geq p_{\lam,\eps,R_0}>0.
\end{align*}
\e{lemma}

We postpone the proof of this lemma for now.

\be{proof}[Proof of Theorem \ref{thm:iid}]
The estimate \eqref{eq:mainrandompert1} was proved in \eqref{eq:largex} above. Let $d\geq 5$ and fix $x_0\in B_{R_0}$. Combining Proposition \ref{prop:wGbounds} and the Nash-Aronson estimate \eqref{eq:aronsondiscrete}, there exists $C_{d,\lam,p} >0$ so that
\beq\label{eq:wGweak}
w_G(x_0)\geq C_{d,\lam} \sum_{\substack{y\in\Z^d, y\sim x_0}}|G(x_0)-G(y)|.
\eeq
We now investigate whether $|G(x_0)-G(y)|$ can vanish. We apply the Green's function comparison, Lemma \ref{lm:GFcomparison} with $R_0=R_{d,\lam,p}$ and $\eps>0$ to be determined later. It says that there exists $\beta\in [\lam,1]$ so that with probability $p_{\lam,\eps,d,p}>0$, we have
\beq\label{eq:GFreplacement}
\sum_{\substack{y\in\Z^d, y\sim x_0}}|G(x_0)-G(y)|
\geq \beta^{-1}\sum_{\substack{y\in\Z^d, y\sim x_0}}|G_0(x_0)- G_0(y)|-4d\eps.
\eeq
The deterministic Lemma \ref{lm:G0} (``The free Green's function is never locally constant'') implies that there exists $C^{\mathrm{free}}_{d,R_0}>0$ so that
$$
\beta^{-1}\sum_{\substack{y\in\Z^d, y\sim x_0}}|G_0(x_0)- G_0(y)|
>\beta^{-1}C^{\mathrm{free}}_{d,R_0}.
$$
Now we choose $\eps=\eps_{d,R_0}=\eps_{\lam,d,p}$ small enough such that $\beta^{-1}C^{\mathrm{free}}_{d,R_0}-4d\eps\geq {C^{\mathrm{free}}_{d,R_0}}/({2\beta})$.

Let $\Omega_0$ denote the event that $\sum_{\substack{ y\sim x_0}}|G(x)-G(y)|>{C^{\mathrm{free}}_{d,R_0}}/({2\beta})$. We have shown that
\begin{align*}
\mathbb P\l(\Om_0\r) >p_{\lam,d,p}>0.
\end{align*}
 By \eqref{eq:wGweak} and positivity, we find that 
 $$
 \qmexp{w_G(x_0)^p}^{1/p}\geq C_{d,\lam} 
 \qmexp{\mathbbm 1_{\Om_0} \sum_{\substack{y\in\Z^d, y\sim x_0}}|G(x_0)-G(y)|}> C_{d,\lam}  \frac{C^{\mathrm{free}}_{d,R_0}}{2\beta}p_{\lam,d,p}>0.
 $$
 Since $x_0\in B_{R_0}$ was arbitrary and $R_0=R_{d,\lam,p}$, we have proven Theorem~\ref{thm:iid}.
\e{proof}

It remains to prove  the Green's function comparison, Lemma \ref{lm:GFcomparison}.
\be{proof}[Proof of Lemma \ref{lm:GFcomparison}]
Let $\eps>0$. We choose $\beta\in [\lam,1]$ to be a point in the support of the random variable $F(\zeta_{e_1})$ so that $\mathbb P(F(\zeta_{e_1})\in (\beta-\eps,\beta+\eps))$ is maximal. A covering argument shows that
$$
\mathbb{P}\left(F(\zeta_{e_1})\in (\beta-\eps,\beta+\eps)\right)\geq p_{\lam,\eps}>0.
$$
with $p_{\lam,\eps}$ depending only on $\lam,\eps$ and not on the law of $F(\zeta_{e_1})$.
We define the larger radius
$$ R_\eps=\max\{4R_0,\eps^{-2}\} .$$ 
We say $e \in B_{R_\eps}$ if $e=(x,y)$ and $x\in B_{R_\eps}$ or $y\in B_{R_\eps}$.
Since the $\{F(\zeta_{e})\}_{e\in \mathbb E^d}$ are i.i.d.\ random variables, we can strengthen the above to the statement that there exists $ p_{R_0,\lam,\eps}>0$ so that
\beq\label{eq:betasupport}
\mathbb{P}\l(F(\zeta_{e})\in (\beta-\eps,\beta+\eps) \mbox{ for all } e \in B_{R_\eps} \r)\geq  p_{\lam,\eps,R_0}>0.
\eeq
From now on, we fix a realization of $\{F(\zeta_{e})\}_{e\in \mathbb E^d}$ which we assume satisfies $F(\zeta_{e})\in (\beta-\eps,\beta+\eps)$ for all $e\in B_{R_\eps}$.

We now replace the coefficients $a(e)=F(\zeta_e)$ in the Green's function one-by-one by the constant $\beta$ via the resolvent identity. To express this compactly, we introduce convenient notation from \cite{MO1}. Let $F\subset \mathbb E^d$ and let $F'=F\cup \{e'\}$ for some fixed edge $e'\not\in F$. For $\gam\in\{F,F'\}$ let $a^\gam=\{a^{\gam}(e)\}_{e\in \mathbb E^d}$ be two families of coefficients in $\mathcal A_{\lam}$ satisfying
$$
a^F(e')\neq a^{F'}(e')  \quad \textnormal{ and } \quad a^F(e)= a^{F'}(e),\qquad  e\neq e'.
$$
We write $G^\gam$, $\gam\in\{F,F'\}$, for the corresponding Green's functions. The resolvent identity gives for $ x\in \Z^d $
\beq\label{eq:resolventidentity}
G^F(x,0)-G^{F'}(x,0)=(a^F(e')- a^{F'}(e)) \nabla G^{F}(x,e') \nabla G^{F'}(e',0),\eeq
cf.\ equation~(53) in \cite{MO1}.
Let $\{e_n\}_{n\geq 0}=\mathbb E^d$ be an enumeration of the edge set. We define
$$
F_N:=\bigcup_{1\leq n\leq N} \{e_n\},\qquad  N\geq 0
$$
with the convention that $F_0=\emptyset$ and $F_\infty=\mathbb E^d$. We note that every edge $e$ has a unique representation as $e=[x,x+e_j]$ with $x\in\Z^d$ and $j\in \{1,\ldots,d\}$. We use this to define the modified coefficients
\beq\label{eq:aedefn}
a^{F_N}(e)=
\be{cases}
a(e),\qquad &\textnormal{if } e=[x,x+e_j]\in F_N,\\
 \beta,\qquad &\textnormal{if } e\not\in F_N.
\e{cases}
\eeq

We can then use telescoping and the resolvent identity \eqref{eq:resolventidentity} to write, for every $x_1\in B_{2R_0}$,
\beq\label{eq:GminusGbeta}
\begin{aligned}
	&G(x)-\beta^{-1} G_0(x_1)=G^{F_0}(x_1)-G^{F_\infty}(x_1)\\
	&=\sum_{N\geq 0} G^{F_N}(x_1)-G^{F_{N+1}}(x_1)\\
	&=\sum_{N\geq 0} (a^{F_N}(e_{N+1})- a^{F_{N+1}}(e_{N+1})) \nabla G^{F_N}(x_1,e_{N+1}) \nabla G^{F_{N+1}}(e_{N+1},0).
\end{aligned}
\eeq
We decompose the last sum in $N$ as follows. Let $\mathcal I\subset \mathbb N_0$ be the finite index set of non-negative integers $N\geq 0$ so that $e_{N+1}=[x,x+e_j]$ with $x\in B_{R_\eps}$. On this set, we may apply \eqref{eq:betasupport} and use \eqref{eq:aedefn} to obtain
$$
|a^{F_N}(e_{N+1})- a^{F_{N+1}}(e_{N+1})|\leq \eps
$$
On $B_{R_\eps}$, we also use the simple a priori estimate
\beq\label{eq:GFapriori}
|\nabla G^{F_N}(x_1,e_{N+1}) \nabla G^{F_{N+1}}(e_{N+1},0)|
\leq 4 \l(\sup_{N\geq 0} \sup_{x,y\in \Z^d} G^{F_N}(x,y)\r)^2
\leq C_{d,\lam}.
\eeq
which holds by \eqref{eq:aronsondiscrete}. Employing these estimates for $n\in\mathcal I$ in \eqref{eq:GminusGbeta} gives

\begin{multline}\label{eq:GimplementI}
	|G(x_1)-\beta^{-1} G_0(x_1)|\\
	\leq\eps | \mathcal I| C_{d,\lam}+\sum_{N\in \mathbb N_0\setminus \mathcal{I}}
	|\nabla G^{F_N}(x_1,e_{N+1})| |\nabla G^{F_{N+1}}(e_{N+1},0)|,
\end{multline}
where $| \mathcal I|<\infty$ denotes the cardinality of $\mathcal I$. 

To estimate the sum over $\mathbb N_0\setminus \mathcal{I}$, we first reparametrize it using the fact that there is a bijective map $N:B_{R_\eps}^c \times \{1,\ldots,d\}\to \mathbb N_0\setminus \mathcal I$ so that $e_{N(x,j)}=[x,x+e_j]$.
\begin{multline*}
	\sum_{N\in \mathbb N_0\setminus \mathcal{I}} |\nabla G^{F_N}(x_1,e_{N+1})| |\nabla G^{F_{N+1}}(e_{N+1},0)|\\
	=\sum_{x\in B_{R_\eps}^c} \sum_{j=1}^d |\nabla G^{F_{N(x,j)}}(x_1,[x,x+e_j])| |\nabla G^{F_{N(x,j)+1}}([x,x+e_j],0)|.
\end{multline*}
Next, we refine the a priori estimate \eqref{eq:GFapriori} at large distances via \eqref{eq:aronsondiscrete}  to
$$
\begin{aligned}
	|\nabla G^{F_{N(x,j)}}(x_1,[x,x+e_j])| |\nabla G^{F_{N(x,j)+1}}([x,x+e_j],0)|\leq C_{d,\lam} |x_1-x|^{2-d} |x|^{2-d}.
\end{aligned}
$$
We observe that $|x-x_1|\geq |x|/2$ for all $ x\in B_{R_{\eps}^{c}} $ because $x_1\in B_{2R_0}$ and $R_\eps\geq 4R_0$. Hence $|x_1-x|^{2-d}\leq C |x|^{2-d}$ 
and  since $ \sum_{x\in \Z^{d}} |x|^{-d-1/2}\leq C_{d}<\infty$ we have shown that
$$
\sum_{x\in B_{R_\eps}^c} \sum_{j=1}^d |\nabla G^{F_{N(x,j)}}(x_1,[x,x+e_j])| |\nabla G^{F_{N(x,j)+1}}([x,x+e_j],0)|
\leq C_{d,\lam} R_\eps^{4-d-{1}/{2}}.
$$
Whenever $ d\ge5 $, we have $  R_\eps^{4-d-{1}/{2}}\leq \eps $ since $R_\eps\geq \eps^{-2}$. 
In view of  \eqref{eq:GimplementI}, this implies
$$
|G(x_1)-\beta^{-1} G_0(x_1)|\leq C_{d,\lam,R_0} \eps.
$$
Note that all estimates are uniform in $x_1\in B_{2R_0}$ once we assume that \eqref{eq:betasupport} occurs. This proves Lemma \ref{lm:GFcomparison}.
\e{proof}


\section{Application to Rellich inequalities}\label{sect:rellich}
In this short final section, we explain the implications of the results from Section~\ref{sect:main2} for Rellich inequalities on graphs. For background, we recall that the original ``Rellich inequality'' was presented by F.~Rellich at the ICM 1954 in Amsterdam \cite{R}. Very recently, \cite{KePiPo4} provided a general mechanism for generating Rellich inequalities from strictly positive Hardy weights on graphs (see \cite{Ro} for the case of strongly local Dirichlet forms). Adapting \cite[Corollary 4.1]{KePiPo4} to the case of elliptic operators on $\Z^d$ and the subharmonic function $u=G$, we obtain the following result.

\be{thm}[Rellich inequality on $\Z^d$, \cite{KePiPo4}]\label{thm:rellich} 
Let $ a\in \mathcal{A}_{\lambda} $ for $ \lambda\in(0,1] $, $\mathcal S=\supp w_G\subseteq \Z^{d}$  and  $\al\in (0,1)$. Then we have the Rellich inequality
\beq\label{eq:rellich}
\|\mathbf{1}_\varphi \Delta \varphi \|_{\frac{G^\al}{w_G}} \geq (1-\gam) \|\varphi\|_{G^\al w_G},\qquad \ph\in C_{c}(\mathcal S),
\eeq
where $\mathbf{1}_{\ph} $ is the characteristic function of the support of $ \ph $ and the constant is
$$
\gam=\l(\frac{1-(\lambda^{2}/2d)^{\al/2}}{1-(\lambda^{2}/2d)^{1/2}}\r)^2.
$$
\e{thm}
\begin{proof}
In view of \cite[Remark~3.4]{KePiPo4} the assumption of standard weights can replaced by the ellipiticity condition \eqref{E} which is satisfied in our setting with $ E= \lambda^{2}/2d $. Precisely, in  \cite[Theorem~3.3]{KePiPo4} from which   \cite[Corollary 4.1]{KePiPo4} is deduced one can replaced the constant $ D $ with  $ 1/E $. 
\end{proof}

For the free Laplacian on $\Z^d$, $d\geq 5$, and $\al=\tfrac{2}{d-2}$ the standard Green's function asymptotic shows that $\frac{G_0^\al}{w_{G_0}} \sim C$ and $G_0^\al w_{G_0}\sim C'  |x|^{-4}$ \cite[Example 4.5]{KePiPo4}. This matches the $|x|^{-4}$-scaling of the original Rellich weight on $\R^d$, \cite{R}.

We consider this question in the i.i.d.\ random setting of Section \ref{ssect:iid}. They allow us to extend the $|x|^{-4}$ from the free Laplacian to these random elliptic coefficients, again in a probabilistic sense.


\be{proposition}[$|x|^{-4}$-scaling for Rellich weights]
For $d\geq 5$, let $\al=\tfrac{2}{d-2}$. Then, there exists $C_{d,\lam}>0$ such that
$$
\begin{aligned}
\left\langle G(x)^\al w_G(x)  \right\rangle \geq C_{d,\lam} (1+|x|)^{-4}
\end{aligned}
$$
and there exists   $ c_{d,\lam}>0 $ such that
$$
\mathbb P\l(\frac{G(x)^\al}{w_G(x)}<C_{d,\lam}\r)\geq c_{d,\lam},\qquad  x\in \Z^d.
$$
\e{proposition}

\be{proof}
The first bound follows from \eqref{eq:aronsondiscrete} and Theorem \ref{thm:iid}. 
The second bound follows  from \eqref{eq:aronsondiscrete} and Corollary \ref{cor:smm}.
\e{proof}

Note that the direction of the bounds are the right ones to be applicable in \eqref{eq:rellich}. Namely, on the left-hand side of \eqref{eq:rellich}, the weight will be constant, while on the right-hand side it behaves as $|x|^{-4}$ for large $x$, exactly as for the discrete Laplacian and as in Rellich's original inequality \cite{R}.

\section*{Acknowledgments} The authors are grateful to Scott Armstrong and Yehuda Pinchover for very valuable discussions. They would also like to thank Peter Bella, Mitia Duerinckx, Arianna Giunti, and Felix Otto for helpful comments on a draft version of the paper. MK acknowledges the financial support of the German Science Foundation. The authors would like to thank the organizers of the program ``Spectral Methods in Mathematical Physics'' held in 2019 at Institut Mittag-Leffler where this project was initiated. 
\begin{appendix}

%
%
%
%

\section{The free Green's function is never locally constant}\label{app:G0}
The following lemma is used in the main text to derive lower bounds on Hardy weights near the origin. It is elementary, but does not appear to be completely standard. We learned of the argument through \cite{mathoverflow} and include it here for the convenience of the reader.

We recall that $G_0$ is the Green's function of the free Laplacian on $\Z^d$.

\be{lemma}\label{lm:G0}
Let $d\geq 1$. Then
\begin{align*}
	\sum_{\substack{y\in\Z^d:\\ y\sim x}}|G_0(x)-G_0(y)| >0,\qquad  x\in\Z^d.
\end{align*}
\e{lemma}

\be{proof}
We write $\mathbb{P}^0$ for the probability measure of the symmetric simple  random walk $ S $ started at the origin. A random-walk representation of the Green's function is given by
$$
G_0(x)=C_d\sum_{n\geq 0} \mathbb{P}^0(S_{n}=x).
$$
Let $ x\in \Z^{d} $ and without loss of generality, we suppose that the first coordinate $x_1\geq 1$. Moreover, we assume that $\sum_{i=1}^d x_i$ is odd; the even case can be argued analogously. Observe that then
$$
\mathbb{P}^0(S_{n}=x+e_1)=\mathbb{P}^0(S_{n}=x-e_1)=0,\qquad  n\geq 0 \textnormal{ odd},
$$
due to parity considerations.

Now we claim that if $\mathbb{P}^0(S_{n}=x+e_1)>0$, then
\beq\label{eq:G0claim}
\mathbb{P}^0(S_{n}=x+e_1)<\mathbb{P}^0(S_{n}=x-e_1),\qquad  n\geq 0 \textnormal{ even}.
\eeq
This implies the assertion of the lemma after summation in $n$.

It remains to prove \eqref{eq:G0claim}. Given an index set $\mathcal I_n\subset \{1,\ldots,n\}$, let $A(\mathcal I_n)$ be the random event that the steps $\mathcal I_n$ occurr in the $\pm e_1$ direction and the steps in $\{1,\ldots,n\}\setminus \mathcal I_n$ do not. By conditioning we have
$$
\mathbb{P}^0(S_{n}=x\pm e_1)=\sum_{\mathcal I_n\subset \{1,\ldots,n\}}  \mathbb{P}^0(S_{n}=x\pm e_1\vert A(\mathcal I_n))\, \mathbb{P}^0(A(\mathcal I_n).
$$
Therefore, it suffices to prove that if $\mathbb{P}^0(S_{n}=x+e_1\vert A(\mathcal I_n))>0$, then
\begin{align*}
	\mathbb{P}^0(S_{n}=x+e_1\vert A(\mathcal I_n))<\mathbb{P}^0(S_{n}=x-e_1\vert A(\mathcal I_n)),\qquad  n\geq 0 \textnormal{ even}.
\end{align*}To see this, we use the fact that conditional on $A(\mathcal I_n)$, we have two independent random walks: one in the $\pm e_1$ direction of $\tilde n=|\mathcal I_n|$ steps and one in the remaining $d-1$ directions of $n-\tilde n$ steps. For any $y\in\Z^d$, we write $y=(y_1,P(y))$ with $P(y)\in \Z^{d-1}$. Note that $P(x+e_1)=P(x-e_1)=P(x)$. Hence, if $\mathbb{P}^0(S_{n}=x+e_1\vert A(\mathcal I_n))>0$, then
$$
\begin{aligned}
\mathbb{P}^0(S_{n}=x+e_1\vert A(\mathcal I_n))
&=\mathbb{P}^0(P(S_{n})=P(x) \vert A(\mathcal I_n)) \binom{\tilde n}{\tfrac{\tilde n+x_1+1}{2}} 2^{-\tilde n}\\
&<\mathbb{P}^0(P(S_{n})=P(x) \vert A(\mathcal I_n)) \binom{\tilde n}{\tfrac{\tilde n+x_1-1}{2}} 2^{-\tilde n}\\
&=\mathbb{P}^0(S_{n}=x-e_1\vert A(\mathcal I_n)),
\end{aligned}
$$
where we used the elementary estimate that
$$
\binom{\tilde n}{\tfrac{\tilde n+x_1+1}{2}}<\binom{\tilde n}{\tfrac{\tilde n+x_1-1}{2}}
$$
because $\tilde n\ge x_1\geq 1$. This proves Lemma \ref{lm:G0}.
\e{proof}

\end{appendix}

\bibliographystyle{amsplain}

\end{document}